\newtheorem{theorem}{Theorem}
\newtheorem{lemma}{Lemma}[section]
\newtheorem{corollary}[lemma]{Corollary}
\newtheorem{proposition}[lemma]{Proposition}
\theoremstyle{remark}
\newtheorem{definition}{Definition}
\newtheorem{example}[lemma]{Example}
\newtheorem{remark}[lemma]{Remark}
\newcommand{\bl}{\begin{lemma}}
\newcommand{\el}{\end{lemma}}
\newcommand{\bt}{\begin{theorem}}
\newcommand{\et}{\end{theorem}}
\newcommand{\bcor}{\begin{corollary}}
\newcommand{\ecor}{\end{corollary}}
\newcommand{\bp}{\begin{proof}}
\newcommand{\ep}{\end{proof}}
\newcommand{\bpr}{\begin{proposition}}
\newcommand{\epr}{\end{proposition}}
\newcommand{\brem}{\begin{remark} \em}
\newcommand{\erem}{\end{remark}}
\newcommand{\bd}{\begin{definition} \em}
\newcommand{\ed}{\end{definition}}
\newcommand{\bex}{\begin{example} \em
}
\newcommand{\eex}{\end{example}}
\newcommand{\beq}{\begin{equation} }
\newcommand{\eeq}{\end{equation}}
\newcommand{\bi}{\begin{itemize}
  }
\newcommand{\ei}{\end{itemize}}
\newcommand{\ben}{\begin{enumerate} }
\newcommand{\een}{\end{enumerate} }
\newcommand{\refeq}[1]{(\ref{#1})}
\newenvironment{enumr}{

\begin{enumerate}     }{\end{enumerate}

}
\newcommand{\benr}{\begin{enumr}
  }
\newcommand{\eenr}{
\end{enumr}}
\newcommand{\ignore}[1]{}
\newcommand{\al}[1]{\forall #1\:}
\newcommand{\ex}[1]{\exists #1\:}
\newlength{\hilflh}
\newcommand{\naturals}{\mathbb{N}}
\renewcommand{\emptyset}{\varnothing}
\newcommand{\gw}{\omega}
\renewcommand{\phi}{\varphi}
\newcommand{\eqv}{\leftrightarrow}
\newcommand{\ol}{\overline}
\newcommand{\sir}[1]{\Sigma_{#1}\mbox{\sf -IR}}
\newcommand{\pir}[1]{\Pi_{#1}\mbox{\sf -IR}}
\newcommand{\PA}{\mathrm{PA}}
\newcommand{\EA}{\mathrm{EA}}
\renewcommand{\models}{\vDash}      
\renewcommand{\nmodels}{\nvDash}
\newcommand{\nat}{\naturals}
\newcommand{\Q}{\mathsf{Q}}
\renewcommand{\leq}{\leqslant}
\newcommand{\SQ}{\mathsf{SQ}}
\newcommand{\SR}{\mathsf{S}^{\Pi}}
\newcommand{\ST}{\mathsf{S}^{\Sigma}}
\author[1]{Lev D. Beklemishev}
\author[1]{Daniyar  S. Shamkanov}
\author[1,2,3]{Ivan N. Smirnov}
\affil[1]{Steklov Mathematical Institute of Russian Academy of Sciences, Moscow}
\affil[2]{Ivannikov Institute for System Programming of the Russian Academy of Sciences}
\affil[3]{Moscow Institute of Physics and Technology}
\title{Fragments of Arithmetic and Cyclic Proofs} 
\begin{document}
\maketitle

\section{Introduction}

Cyclic proof systems are well adapted for axiomatization of logics enhanced with fixed points, recursion or induction mechanisms. Such systems are known for extensions of linear logic with fixed points \cite{BaDoSa16, NoSaTa18}, for the modal $\mu$-calculus \cite{NiWa96, Sti14}, for the separation logic \cite{BrBoCa08, KiNaTeUn20}, as well as for the G\"{o}del-L\"{o}b provability logic \cite{Sham14} and arithmetic \cite{Sim17, BeTa17}. They seem to be promising for tasks of proof search, since they allow, in a sense, to search simultaneously for more complex kinds of inductions which may result in shorter proofs. So, the task of guessing the induction formula is replaced by the task of detecting possible cycles in a proof. James Brotherston et al.\ wrote a generic theorem prover \textsc{Cyclist} based on the cyclic proof format \cite{BrGoPe12}. Similar but weaker mechanisms are also known in the context of resolution based first order provers (such as \textsc{Vampire}), when these are enhanced by some basic forms of induction (see the analysis of some such systems in terms of the so-called clause set cycles by Hetzl and Vierling \cite{HeVi21}).  

Since the cyclic proof format is relatively new, a lot of research activity is currently underway also on the theoretical side of such proofs and non-well-founded proofs in general. Researchers are attracted both by the semantic aspect of  non-well-founded and cyclic systems \cite{San02, FoSa13, Sham20} and by studying them from the point of view of structural proof theory \cite{BaDoSa16, DaPo18, Sau23, OdBrTa23, SiStZe24, Sham24}. In addition, cyclic systems are applied to obtain interpolation properties \cite{Sham14, AfLeMe21}, as well as to prove realization theorems that relate modal and justification logics \cite{Sham16, Sham25}. A special and interesting direction is the study of cyclic proofs in the context of arithmetic \cite{Sim17, BeTa17, Das20} and the first-order logic with inductive definitions \cite{Bro05, BrSi11, BeTa19}.

The goal of this paper is to design an alternative cyclic proof system for Peano arithmetic that could be simpler than the existing ones and well-adapted both for proof analysis and for 
automatizing inductive proof search. In addition, we will show how various traditional subsystems of Peano arithmetic defined by restricted forms of induction can be represented as fragments of the proposed system.  

\bigskip  
A cyclic proof is a derivation tree where the hypotheses (leaves) are axioms, as in the usual proof, or are connected by `back-links' to the identical formulas or subsequents occurring \emph{below} in the proof. Tracing such a proof backwards from conclusions to premisses may result in infinite loops. Therefore, in order for such a proof figure to correspond to a correct argument (rather than a vicious circle), one imposes a global soundness condition in terms of the variables occurring in the proof. Details of such conditions can vary, but they can be roughly stated as requiring that every path in the infinite `unfolding' of a cyclic proof infinitely often goes through a rule that ensures the descent of the parameters (called \emph{the progress point}). This allows explicit induction rules in such a system to be abandoned in favor of the simpler \emph{case} rules (which correspond to the axiom that every natural number is either $0$ or a successor). 

The proofs using the induction axioms can be modeled by rather simple cyclic proofs where the back-links are independent from one another. However, the more complex cyclic proofs can be hard to transform into the usual induction proofs and establishing exact correspondence with the traditional Hilbert or Gentzen-style systems is often quite involved. 

There are three systems related to cyclic arithmetic in the current literature:
\begin{enumerate}
    \item The system CLKID of inductive definitions by Brotherston, later studied by Berardi and Tatsuta~\cite{BeTa17};
    \item The system CA by Simpson and its refinements C$\Sigma_n$ studied by Das \cite{Sim17,Das20}. These are cyclic versions of the first order Peano arithmetic $\PA$ and of its fragments;
    \item A cyclic version of G\"odel's system T from a recent paper by Das \cite{Das21}.
\end{enumerate}

In this paper we deal with a cyclic proof system for the first-order arithmetic. There are two main aspects in which the proposed system differs from those in the literature. Firstly, the soundness conditions in our system are simpler than those of CA and C$\Sigma_n$ which hopefully makes both the proof analysis and the proof search more straightforward. 

Secondly, and more importantly, our proof system differs from the others in the variable discipline. As a result, by suitable restrictions of the quantifier complexity of formulas in the proof we obtain the familiar series of fragments of $\PA$ that are important in the metamathematics of Peano arithmetic, such as the fragments $\mathit{I\Sigma}_n$ defined by $\Sigma_n$-induction schemata and $\mathit{I\Sigma}_n^R$  axiomatized by $\Sigma_n$-induction rules. 

The systems C$\Sigma_n$ studied in \cite{Das20} are obtained by restricting the complexity of all formulas in a CA proof to the class $\Sigma_n$ or $\Pi_n$. However, under such a restriction the set of provable statements will not, in general, be closed under logical consequence, since the complexity of cuts is also restricted to $\Sigma_n$ or $\Pi_n$. To deal with this problem, Das defines C$\Sigma_n$ as the closure under the ordinary logical consequence of the set of formulas provable by $\Sigma_n$-restricted cyclic proofs. He then shows that the resulting theory corresponds to the fragment of Peano arithmetic defined by all $\Pi_{n+1}$-consequences of $\Sigma_{n+1}$-induction schema. In particular, the theory C$\Sigma_0$ can be axiomatized by the set of $\Pi_1$-consequences of $\mathit{I\Sigma}_1$, which is of consistency strength of $\mathit{I\Sigma}_1$ yet strictly weaker than the primitive recursive arithmetic  PRA and the equivalent fragment $\mathit{I\Sigma}_1^R$ in the language of $\PA$.  

An explicit Hilbert-style formulation of the theory axiomatized by $\Pi_{n+1}$-consequences of $\mathit{I\Sigma}_{n+1}$ is known. By the so-called \emph{Schmerl's formula} (see~\cite[Theorem 4]{Bek99b}) it is deductively equivalent to the uniform  $\Pi_{n+1}$-reflection schema over $\EA$ iterated $\gw^\gw$ times. 
This paper arose out of the natural question, inspired by the work of Das, which restrictions on cyclic proofs would yield systems deductively equivalent to the more fundamental fragments of Peano arithmetic such as $\mathit{I\Sigma_n}$ or $\mathit{I\Sigma_n^R}$. 

Our methods are considerably simpler than those of the previous papers. In particular, we use elementary syntactic arguments without any references to B\"uchi automata. This can be explained by the more explicit character of our notion of cyclic proof. 

The plan of the paper is as follows. In Section \ref{prelim}, we introduce necessary preliminaries
on first order arithmetic and specify a Tait-style sequent calculus for Robinson's arithmetic $\Q$. In Section \ref{infinit}, we introduce non-well-founded proof systems and show that they are strong enough to prove all true arithmetical sentences. In Section~\ref{cyclic}, we introduce variable discipline using annotated proofs and define cyclic proofs. Finally, in Sections~\ref{uncycle} and \ref{rules}, we relate the cyclic systems to the main fragments of Peano arithmetic such as $\mathit{I\Sigma_n}$, $\mathit{I\Pi}^R_{n}$ and $\mathit{I\Sigma}_n^R$. 

\section{Preliminaries} \label{prelim}

We assume familiarity with the standard Hilbert-style axiomatization of first order Peano arithmetic following, e.g., \cite{HP}. The language of Peano arithmetic has constant $0$, function symbols $s$ (successor), $+$ and $\cdot$, and relation symbol $=$. The formula $x\leq y$ is an abbreviation for $\ex{z}(z+x=y)$.  

\emph{Bounded quantifiers} are abbreviations for
 \begin{gather*}
     \forall y \leq t \; \varphi \coloneq \forall y \; (y \leq t \rightarrow \varphi), \qquad \exists y \leq t \; \varphi \coloneq \exists y \; (y \leq t \wedge \varphi),
 \end{gather*}
 where $t$ is a term and $y$ does not occur in $t$. A \emph{bounded formula} is an arithmetical formula built from atomic formulas by logical connectives and bounded quantifiers only. The set of all such formulas is denoted $\Delta_0$. 
 
 The classes of $\Sigma_n$ and $\Pi_n$ formulas are inductively defined as follows: We let $\Sigma_0 = \Pi_0=\Delta_0$.
 An arithmetical formula is $\Sigma_{n+1}$ ($\Pi_{n+1}$) if it is obtained from $\Pi_n$ ($\Sigma_n$) formulas using existential (universal) quantifiers and positive propositional connectives ($\land$ and $\lor$).

\emph{Robinson's arithmetic} $\Q$ is axiomatized by the axioms and rules of the first-order predicate logic with equality as well as by (the universal closures of) the following basic axioms:

\ben 
\item $\neg s(x)=0$; \quad $s(x)=s(y)\to x=y$;
\item $x=0\lor \ex{y} x= s(y)$;
\item $x+0=x$; \quad $x+s(y)=s(x+y)$;
\item $x\cdot 0=0$; \quad $x\cdot s(y)=x\cdot y+ x$.
\een 

\emph{Peano arithmetic} $\PA$ is axiomatized by $\Q$ together with the induction schema 
$$\phi(0)\land \al{x}(\phi(x)\to \phi(s(x)) \to \al{x}\phi(x), \leqno \mathrm{(Ind)}$$
for all formulas $\phi$ (possibly containing free variables different from $x$). 

The theories $\mathit{I\Sigma}_n$ ($\mathit{I\Pi}_n$) are obtained by extending $\Q$ by the induction schema $\mathrm{(Ind)}$ for all $\Sigma_n$ (respectively, $\Pi_n$) formulas $\phi$. It is well-known that $\mathit{I\Sigma}_n$ and $\mathit{I\Pi}_n$ are deductively equivalent. Theories $\mathit{I\Sigma}_n$ constitute a strictly increasing hierarchy of fragments of $\PA$; the union of this hierarchy is $\PA$ itself. For all $n>0$, theories $\mathit{I\Sigma}_n$ are finitely axiomatizable.\footnote{For $n=0$ this is a well-known open problem.}  

The \emph{induction rule} is the inference rule 
$$\frac{\phi(0), \quad \al{x}(\phi(x)\to \phi(s(x))}{\al{x}\phi(x)}. \leqno (\mathrm{IR})$$

It is well-known that the closure of $\Q$ under $(\mathrm{IR})$ is equivalent to Peano arithmetic. We let $\mathit{I\Sigma}_n^R$ ($\mathit{I\Pi}_n^R$) denote the theory axiomatized over $\Q$ by $(\mathrm{IR})$ restricted to $\Sigma_n$ ($\Pi_n$) formulas $\phi$. It is known that $\mathit{I\Sigma}_n^R$ is deductively equivalent to $\mathit{I\Pi}_{n+1}^R$ and strictly weaker than $\mathit{I\Sigma}_n$, for $n>0$. For $n=0$, the fragment  $I\Delta_0^R$ is equivalent to $I\Delta_0$. 

None of the theories $\mathit{I\Sigma}_n^R$ for $n>0$ is finitely axiomatizable, and it is known that $\mathit{I\Sigma}_n$ conservatively extends $\mathit{I\Sigma}_n^R$ for $\Pi_{n+1}$ sentences. See~\cite{Bek97a} for more information on the fragments of arithmetic axiomatized by induction rules.    

\bigskip 
Next we introduce a proof system $\SQ$ for Robinson's arithmetic $\Q$ based on a one-sided variant of a sequent calculus for predicate logic due to Tait (see~\cite{Sch77}). We partly follow Negri and von Plato's approach to sequent calculi for axiomatic theories (see \cite{NePl98} or Chapter 6 of  \cite{NePlRa01}) to incorporate the rules for equality axioms and the mathematical axioms of $\Q$.

Recall that \emph{arithmetical terms} are built from a countable set of individual variables $\mathit{Var}=\{x_0, x_1, x_2 \dotsc \}$ and the constant $0$ by means of the unary function symbol $s$ and the binary symbols $+$ and $\cdot$ in a standard way. \emph{Arithmetical formulas}, denoted by $\varphi$, $\psi$, etc., are built up as follows:
\[ \varphi ::= t=t \mid t\neq t \mid (\varphi \wedge \varphi) \mid (\varphi \vee \varphi) \mid \forall y\; \varphi \mid \exists y\; \varphi \;, \]
where $t$ stands for arithmetical terms and $y$ represents an arbitrary individual variable.

The \textit{negation} $\overline{\varphi}$ of a formula $\varphi$ in Tait calculus is a syntactic operation defined by the law of double negation, de Morgan's laws and the duality laws for quantifiers, i.e., we inductively define:
\begin{gather*}
\overline{t_0=t_1} \coloneq t_0\neq t_1,\qquad
\overline{t_0\neq t_1} \coloneq t_0=t_1,\\
\overline{(\varphi_0 \wedge \varphi_1)} \coloneq (\overline{\varphi_0} \vee \overline{\varphi_1}),\qquad
\overline{(\varphi_0 \vee \varphi_1)} \coloneq (  \overline{\varphi_0} \wedge \overline{\varphi_1} ) ,\\
\overline{\forall y\; \varphi} \coloneq \exists y\; \overline{\varphi},\qquad
\overline{\exists y\; \varphi} \coloneq \forall y\; \overline{\varphi}.
\end{gather*}
Note that the operation of negation is idempotent. In other words, $\overline{\overline{\varphi}}$ is equal to $\varphi$ for any formula $\varphi$.
Also, we put
\begin{gather*}
\top \coloneq 0=0, \quad \bot\coloneq 0\neq 0, \quad \varphi\rightarrow \psi \coloneq \overline{\varphi} \vee \psi, \quad \varphi \leftrightarrow \psi \coloneq (\varphi\rightarrow \psi) \wedge
(\psi\rightarrow \varphi).
\end{gather*}

We define \textit{sequents} as finite multisets of arithmetical formulas. For a sequent $\Gamma $, its intended interpretation as a formula is $\bigvee \Gamma$, where $\bigvee \emptyset \coloneq \bot$. Recall that sequents are often written without any curly braces, and the comma in the expression $\Gamma, \Delta$ means the multiset union.

The initial sequents and inference rules of $\SQ$ are:
\begin{gather*} \mathsf{(ax_a)}\quad 
\Gamma, t_0=t_1, t_0\neq t_1 , \qquad \mathsf{(ax_s)}\quad \Gamma, s(t)\neq 0 ,
\end{gather*}
\begin{gather*}
\AXC{$\Gamma , \varphi$}
\AXC{$\Gamma , \psi$}
\LeftLabel{$\mathsf{(\wedge)}$}
\RightLabel{ ,}
\BIC{$\Gamma , \varphi \wedge \psi$}
\DisplayProof \qquad
\AXC{$\Gamma , \varphi ,\psi $}
\LeftLabel{$\mathsf{(\vee)}$}
\RightLabel{ ,}
\UIC{$\Gamma , \varphi \vee \psi $}
\DisplayProof
\end{gather*}
\begin{gather*}
\AXC{$\Gamma , \varphi [y\mapsto z]  $}
\LeftLabel{$\mathsf{(\forall)}$}
\RightLabel{($z\nin \textit{FV}(\Gamma,\forall y\; \varphi)$),}
\UIC{$\Gamma , \forall y\; \varphi $}
\DisplayProof \qquad
\AXC{$\Gamma  , \varphi [y\mapsto t] , \exists y\; \varphi $}
\LeftLabel{$\mathsf{(\exists)}$}
\RightLabel{ ,}
\UIC{$\Gamma , \exists y\; \varphi  $}
\DisplayProof
\end{gather*}
\begin{gather*}
\AXC{$\Gamma  , t \neq t $}
\LeftLabel{$\mathsf{(ref)}$}
\RightLabel{ ,}
\UIC{$\Gamma  $}
\DisplayProof \qquad
\AXC{$\Gamma , t_0\neq t_1, (u_0\neq u_1)[y\mapsto t_0]  , (u_0\neq u_1)[y\mapsto t_1]  $}
\LeftLabel{$\mathsf{(rep)}$}
\RightLabel{ ,}
\UIC{$\Gamma , t_0\neq t_1, (u_0\neq u_1)[y\mapsto t_0]  $}
\DisplayProof 
\end{gather*}
\begin{gather*}
\AXC{$\Gamma , t+0\neq t $}
\LeftLabel{$\mathsf{(add}_0)$}
\RightLabel{ ,}
\UIC{$\Gamma  $}
\DisplayProof \qquad
\AXC{$\Gamma , t+s(u)\neq s(t+u) $}
\LeftLabel{$(\mathsf{add}_s)$}
\RightLabel{ ,}
\UIC{$\Gamma  $}
\DisplayProof 
\end{gather*}
\begin{gather*}
\AXC{$\Gamma , t\cdot 0 \neq 0 $}
\LeftLabel{$(\mathsf{mult}_0)$}
\RightLabel{ ,}
\UIC{$\Gamma  $}
\DisplayProof \qquad
\AXC{$\Gamma , t\cdot s(u)\neq t\cdot u +t $}
\LeftLabel{$(\mathsf{mult}_s)$}
\RightLabel{ ,}
\UIC{$\Gamma  $}
\DisplayProof 
\end{gather*}
\begin{gather*}
\AXC{$\Gamma , s(t_0)\neq s(t_1), t_0\neq t_1 $}
\LeftLabel{$(\mathsf{pred})$}
\RightLabel{ ,}
\UIC{$\Gamma , s(t_0)\neq s(t_1) $}
\DisplayProof \qquad
\AXC{$\Gamma [y\mapsto 0]$}
\AXC{$\Gamma [y\mapsto s(y)]$}
\LeftLabel{$\mathsf{(case)}$}
\RightLabel{($y\in \textit{FV}(\Gamma)$),}
\BIC{$\Gamma $}
\DisplayProof
\end{gather*}
\begin{gather*}
\AXC{$\Gamma   $}
\LeftLabel{$(\mathsf{weak})$}
\RightLabel{ ,}
\UIC{$\Gamma  , \Delta$}
\DisplayProof \qquad
\AXC{$\Gamma , \varphi  $}
\AXC{$\Gamma , \overline{\varphi}  $}
\LeftLabel{$(\mathsf{cut})$}
\RightLabel{ .}
\BIC{$\Gamma $}
\DisplayProof
\end{gather*}
Here,  $\psi [y\mapsto t]$ ($\Gamma [y\mapsto t]$) denotes the formula (the sequent) obtained from $\psi$ ($\Gamma$) by replacing all free occurrences of $y$ in $\psi$ (in formulas of $\Gamma$) with the term $t$. As usual, we require that no variable of $t$ becomes bound after the replacement. In what follows, if the variable $y$ is clear from the context, we will often write $\psi [y\mapsto t]$ and $\Gamma [y\mapsto t]$ as $\psi (t)$ and $\Gamma (t)$, respectively. 

For the inference rules $(\wedge)$, $(\vee)$, $(\forall)$ and $(\exists)$, the formula explicitly displayed in the conclusion of a rule is called the \emph{principal formula} of the corresponding inference. For the rules $(\mathsf{\forall})$ (or $\mathsf{(case)}$), $z$ (or $y$) is called the \emph{active variable} of the given inference. 

The rules $(\land), (\lor), (\exists), (\forall), (\mathsf{weak})$ and $(\mathsf{cut})$ are the standard rules of the Tait calculus. The rules $(\mathsf{ref})$ and $(\mathsf{rep})$ account for the equality axioms. The rest of the rules and $(\mathsf{ax}_s)$ correspond to the mathematical axioms of $\Q$.  

We also emphasize, though it is inessential for what follows, that we have chosen this slightly uncommon formulation of the arithmetical rules of $\SQ$ since the proofs constructed according to these rules, excluding the rule $\mathsf{(case)}$, enjoy cut elimination \cite{NePl98, NePlRa01}.  

The following lemma is standard, so we essentially leave it without proof. 

\begin{lemma}\label{phiphi}
For any formula $\varphi$ and any finite multiset of formulas $\Gamma$, 
\ben
\item $\SQ\vdash \Gamma, \varphi, \overline{\varphi}$;
\item $\SQ\vdash \Gamma, \phi$ whenever $\SQ\vdash \Gamma,\phi,\phi$.
\een 
\end{lemma}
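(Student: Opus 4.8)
The plan is to prove item (1) by induction on the structure of $\varphi$, and then to obtain item (2) in a single step from item (1) by means of the $(\mathsf{cut})$ rule. Item (2) is thus not an independent induction but a corollary of item (1) together with the fact that cut is a primitive rule of $\SQ$, which is the standard reason this lemma is left essentially without proof.

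For item (1), the base case is when $\varphi$ is atomic. If $\varphi$ is $t_0=t_1$ then $\overline{\varphi}$ is $t_0\neq t_1$, so the sequent $\Gamma,\varphi,\overline{\varphi}$ is literally an instance of the initial sequent $(\mathsf{ax_a})$; the case $\varphi=t_0\neq t_1$ is the same instance up to the order of the multiset, which is immaterial. In the inductive step I would treat the $\wedge$ and $\forall$ cases explicitly and note that the $\vee$ and $\exists$ cases are entirely symmetric, being subsumed by the former two together with the idempotency $\overline{\overline{\varphi}}=\varphi$. In the $\wedge$ case, to derive $\Gamma,\varphi_0\wedge\varphi_1,\overline{\varphi_0}\vee\overline{\varphi_1}$ I would first apply $(\vee)$ (reading from conclusion to premise) to reduce to $\Gamma,\varphi_0\wedge\varphi_1,\overline{\varphi_0},\overline{\varphi_1}$, and then apply $(\wedge)$ to reduce to the two sequents $\Gamma,\overline{\varphi_1},\varphi_0,\overline{\varphi_0}$ and $\Gamma,\overline{\varphi_0},\varphi_1,\overline{\varphi_1}$, each of which is an instance of the induction hypothesis (for $\varphi_0$, respectively $\varphi_1$, with the remaining formulas absorbed into the context).

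The quantifier case is the only place requiring a little care. To derive $\Gamma,\forall y\,\psi,\exists y\,\overline{\psi}$ I would first apply $(\forall)$ with a fresh active variable $z\notin\textit{FV}(\Gamma,\forall y\,\psi,\exists y\,\overline{\psi})$, which exists since the sequent has finitely many free variables, reducing to $\Gamma,\exists y\,\overline{\psi},\psi[y\mapsto z]$. I would then apply $(\exists)$ to the formula $\exists y\,\overline{\psi}$ with witness term $z$, reducing to $\Gamma,\psi[y\mapsto z],\overline{\psi}[y\mapsto z],\exists y\,\overline{\psi}$. Since negation commutes with substitution, $\overline{\psi}[y\mapsto z]$ equals $\overline{\psi[y\mapsto z]}$, so this last sequent is exactly an instance of the induction hypothesis applied to $\psi[y\mapsto z]$. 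The hypothesis is available because substitution preserves the logical complexity on which the induction is carried out, and $\psi$ is a proper subformula of $\forall y\,\psi$.

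Item (2) then follows at once: setting $\Gamma'\coloneq\Gamma,\phi$ and applying $(\mathsf{cut})$ on the formula $\phi$ to the premises $\Gamma',\phi$ and $\Gamma',\overline{\phi}$, the first premise is precisely the assumed derivation of $\Gamma,\phi,\phi$, while the second is $\Gamma,\phi,\overline{\phi}$, provable by item (1); the conclusion of the cut is $\Gamma'=\Gamma,\phi$, as required. The whole argument is routine, and the only genuinely non-mechanical point is the freshness and substitution bookkeeping in the $(\forall)$–$(\exists)$ case of item (1); everything else is a direct application of the rules.
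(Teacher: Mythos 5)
Your proof is correct and is exactly the standard argument the paper has in mind when it declares Lemma~\ref{phiphi} standard and leaves it without proof: item (1) by induction on the build-up of $\varphi$ (with the fresh-variable $(\forall)$--$(\exists)$ step, the commutation of $\overline{(\cdot)}$ with substitution, and the remark that substituting a variable preserves the complexity measure all handled correctly), and item (2) as admissible contraction obtained in one step by $(\mathsf{cut})$ against an instance of item (1). There is nothing to correct.
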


Using Lemma~\ref{phiphi} it is routine to prove that the following proposition holds.

\bpr \label{Q} \text{ }
\ben
\item If $\Q\vdash \phi$, then $\SQ\vdash \phi$;
\item If $\SQ\vdash \Gamma$, then $\Q\vdash \bigvee\Gamma$.
\een
\epr 

\bp We only notice that the rule (\textsf{case}) is equivalent to Axiom 2 of $\Q$ modulo the other rules. 
\ep 

\section{Non-well-founded proofs} \label{infinit}
In this section, we introduce a series of arithmetical sequent calculi $\mathsf{S}_n$, which allow non-well-founded proofs. These systems will have the axioms and inference rules of $\SQ$, but differ only in the notion of proof. 

\begin{definition} An \emph{(unrestricted) $\infty$-proof} is a possibly infinite tree whose nodes are marked by sequents and that is constructed according to the rules of the calculus $\SQ$. In addition, every leaf in an $\infty$-proof is marked by an initial sequent of $\SQ$ and, for every infinite branch in it, there is a tail of the branch satisfying the following conditions: 
(a) the tail does not pass through left premisses of the rule $\mathsf{(case)}$, (b) if the tail passes through the right premiss $\Gamma [z\mapsto s(z)]$ of the rule $\mathsf{(case)}$, then, after that application, the tail does not intersect applications of the rule ($\forall$) with the active variable $z$, (c) for some variable $y$, the tail passes through the right premiss of the rule $\mathsf{(case)}$ with the active variable $y$ infinitely many times. 
\end{definition}

\begin{definition} An $\infty$-proof is \emph{$\Pi_n$-restricted} if every infinite branch in it has a tail satisfying all the conditions above and the following ones: 
(d) if the tail passes through the left (right) premiss of the rule ($\wedge$), then the formula $\varphi$ ($\psi$) belongs to $\Pi_{n}$, (e) if 
the tail intersects an application of the rule ($\forall$), then the formula $\varphi [y\mapsto z]$ belongs to $\Pi_{n}$, (f)
if the tail passes through the left (right) premiss of the rule ($\mathsf{cut}$), then the cut formula $\varphi$ ($\overline{\varphi}$) is from $\Pi_{n}$,
(g) if the tail passes through the right premiss $\Gamma [z\mapsto s(z)]$ of the rule $\mathsf{(case)}$, then the variable $z$ can occur freely only in $\Pi_{n}$-formulas of $\Gamma$.
\end{definition}

As expected, the following soundness lemma holds for unrestricted $\infty$-proofs. 

\bl If $\Gamma$ has an $\infty$-proof, then $\nat\models v(\bigvee \Gamma)$ for all assignments $v\colon \mathit{Var}\to \mathbb{N}$. 
\el 
\bp\ Assume, on the contrary, $v(\bigvee\Gamma)$ is false for some $v$. In this case $\Gamma$ cannot be an axiom of $\SQ$. All the inference rules of $\SQ$ preserve the validity of sequents in $\nat$. Hence, there exists an infinite branch $P$ in the proof tree such that all sequents in $P$ are not valid.  

We construct $P$ inductively by working from the conclusion to the premisses of the rules. Denote by $\Gamma_i$ the $i$-th sequent in $P$ counting from $\Gamma_0:=\Gamma$. Let $v_i$ be the assignment such that $\nat\nmodels v_i(\bigvee\Gamma_i)$. We can assume that $v_{i+1}$ is different from $v_i$ only in the following cases. If $\Gamma_i$ is obtained from $\Gamma_{i+1}$ by a $(\forall)$ rule application, then one must redefine the assignment of the active variable so that $\Gamma_{i+1}$ is false. If $\Gamma_{i}$ is obtained by $\mathsf{(case)}$ rule and $\Gamma_{i+1}$ is its left (right) premiss, then one must put $v_{i+1}(y)=0$ ($v_{i+1}(y)=v_i(y)-1$) for the active variable $y$. In all other cases, we may assume $v_{i+1}(x)=v_i(x)$. 

Consider the tail $T$ of the branch $P$ satisfying conditions (a)--(c), and let $y$ be the variable active in infinitely many applications of $\mathsf{(case)}$ in $T$ according to (c). Taking a shorter tail if necessary we can assume that $T$ begins with an application of $\mathsf{(case)}$ with $y$ active, hence by condition (b) $y$ will not be active in applications of $(\forall)$ anywhere in $T$. It follows that the assignment $v_i(y)$ will only be changed by applications of $\mathsf{(case)}$ rule and is weakly decreasing with $i$. But $T$ goes through infinitely many such applications, a contradiction. \ep 

Our next observation shows that the set of sequents having a $\Pi_n$-restricted $\infty$-proof is closed under a version of $\Pi_n$-restricted $\gw$-rule with side formulas. We denote by $\bar k$ the numeral for the number $k\in\nat$, that is, the term $s(s(\dots(0)\dots))$ ($k$ symbols $s$). 

\bl \label{omrule} Suppose, for each $k$, $\pi_k$ is an $\infty$-proof of $\Gamma,\phi(\bar k)$. Then there is an $\infty$-proof $\pi$ of $\Gamma,\phi(x)$ where $x\notin \mathit{FV}(\Gamma)$. Moreover, if $\phi$ is $\Pi_n$ and all $\pi_k$ are $\Pi_n$-restricted then so is $\pi$. 
\el 

\begin{proof}
    Let $x\notin\mathit{FV}(\Gamma)$. If $x$ does not actually occur free in $\phi(x)$ the claim is trivial. Otherwise, the proof $\pi$  looks as follows:
\begin{equation*}
     \label{Example1}
\AXC{$\pi_0$}
\noLine
\UIC{$\vdots$}
\noLine
\UIC{$\Gamma, \varphi (0)$}
\AXC{$\pi_1$}
\noLine
\UIC{$\vdots$}
\noLine
\UIC{$\Gamma,\varphi (s(0))$}
\AXC{$\pi_2$}
\noLine
\UIC{$\vdots$}
\noLine
\UIC{$\Gamma,\varphi(s(s(0))$}
\AXC{$\dots$}
\LeftLabel{$\mathsf{(case)}$}
\BIC{$\Gamma, \varphi (s(s(x))$}
\LeftLabel{$\mathsf{(case)}$}
\BIC{$\Gamma , \varphi (s(x))$}
\LeftLabel{$\mathsf{(case)}$}
\RightLabel{ .}
\BIC{$\Gamma, \varphi (x)$}
\DisplayProof 
\end{equation*}
Observe that there is one new infinite branch in $\pi$, the rightmost one, and it clearly satisfies the necessary conditions.  
\end{proof}

From this lemma we obtain that $\infty$-proofs are sufficient to prove all true arithmetical sentences, even under the strongest assumption of being $\Pi_0$-re\-stricted. We call a sequent $\Gamma$ \emph{true} if $\Gamma$ is a set of  sentences such that $\nat\models\bigvee\Gamma$.

\bl  Every true sequent $\Gamma$ has a $\Pi_0$-restricted $\infty$-proof. 
\el 

\bp\ Within this proof we will say that a sequent is \emph{provable} if it has a  $\Pi_0$-restricted $\infty$-proof. By induction on $n$, we show that every true sequent $\Gamma\subseteq \Pi_{2n}$ is provable. By weakening, it is sufficient to prove that every true $\Pi_{2n}$-sentence is provable. 
For the case $n=0$, we just recall that $\Q$ proves every true bounded sentence and apply Proposition~\ref{Q}.

Assume the claim holds for $n$, which we call the \emph{main induction hypothesis}. We show that every true $\Pi_{2n+2}$ sentence is provable by a \emph{secondary induction} on the build-up of it. 

First, as the secondary induction basis, we prove that every true $\Sigma_{2n+1}$ sentence is provable. We argue by a subinduction on the build-up of such a sentence. For $\Pi_{2n}$ sentences this holds by the main induction hypothesis. For sentences obtained by conjunction, disjunction or an existential quantifier, the claim easily follows using the inference rules $(\land), (\lor)$, $(\exists)$ and ($\mathsf{weak}$). 

Second, as the secondary induction step, if $\Pi_{2n+2}$ sentences $\phi$ and (or) $\psi$ are provable, then so are $\phi\land\psi$ (respectively, $\phi\lor\psi$), by the rules $\mathsf{(\land)}$, $\mathsf{(\lor)}$ and ($\mathsf{weak}$). 

Finally, we consider a true $\Pi_{2n+2}$ sentence of the form $\al{x}\psi(x)$ with  $\psi$ in $\Pi_{2n+2}$. We must show that $\al{x}\psi(x)$ is provable.

First, we remark that $\SQ$ proves the sequent 
$$\ol{\psi(\bar m)}, x\neq \bar m, \psi(x),$$
which is an instance of the equality schema valid in predicate logic. By the secondary induction hypothesis, for each $m$, the formula $\psi(\bar m)$ is provable. Hence, by $\mathsf{(cut)}$, for each $m$ we have a proof of $x\neq \bar m, \psi(x)$. Applying Lemma \ref{omrule} we obtain a proof of $x\neq z,\psi(x)$, where $z$ is a fresh variable. (The proof is $\Pi_0$-restricted since $x\neq z$ is $\Pi_0$.) It follows that $\al{z} x\neq z,\psi(x)$ is provable, whence by a $\mathsf{(cut)}$ with the formula $\ex{z} x=z$ provable in predicate logic we obtain $\psi(x)$, so $\al{x} \psi (x)$ follows by $\mathsf{(\forall)}$. 
\ep

\ignore{
Let us check that the formula $\al{y\leq x}\psi(y)$ is provable. This formula can be  inferred  by $\mathsf{(\lor)}$ and $\mathsf{(\forall)}$ rules from the sequent 
$$\ol{y\leq x}, \psi(y),$$
which we show to be provable. We establish the following claims.

Claim A. For all $k,l,m$, the sequent 
\begin{equation} \label{A}
    \bar{k}+\bar{l}\neq \bar{m}, \psi(\bar l)
\end{equation}
is provable. 

Indeed, if $k+l\neq m$ is true, then $\SQ$ proves $\bar{k}+\bar{l}\neq \bar{m}$ and then \refeq{A} by weakening. If $k+l\neq m$ is false then $l\leq m$, hence $\psi(\bar l)$ is true, therefore it has a proof by the secondary induction hypothesis. Then we also obtain \refeq{A} by weakening. 

Claim B. For all $k$, the sequent 
\begin{equation} \label{B}
    \bar k+y\neq x, \psi(y)
\end{equation}
is provable. 

Indeed, from Claim A and Lemma \ref{omrule} we conclude that

By Lemma \ref{omrule} it is sufficient to establish that, for all $k$, there is a $\Pi_0$-restricted $\infty$-proof of the sequent $\ol{y\leq \bar k}, \psi(y).$ 
Since $\Q$ proves $$\al{y}(y\leq \bar k\eqv  y=0\lor \dots \lor y=\bar k),$$ it is sufficient to construct a proof of $\psi(\bar m)$ for each $m\leq k$. For any such $m$ the $\Pi_{2n+2}$ formula  $\psi(\bar m)$ is true, hence it has a proof by the secondary induction hypothesis. So, we conclude that $\al{y\leq x}\psi(y)$ is provable.  

{Now, since $\Q$ proves $0+\bar k= \bar k$, for each $k$, by Lemma \ref{omrule}, $0+x=x$ and $x\leq x$ are provable.} Hence, the sequent 
$$\psi(x),\ol{\al{y\leq x}\psi(y)}$$ is also 
  provable. By $\mathsf{(cut)}$ with $\al{y\leq x}\psi(y)$, {we obtain $\psi(x)$. Therefore, $\al{x}\psi(x)$ is provable.} \ep
}


We conclude that the notion of $\Pi_0$-restricted $\infty$-proof is as powerful as the (unrestricted) $\omega$-proof in first order arithmetic. 

Now we introduce a series of arithmetical sequent calculi $\mathsf{S}_n$. In what follows, we also call $\Pi_{n+1}$-restricted $\infty$-proofs \emph{$\infty$-proofs of $\mathsf{S}_n$}. However, in order to obtain notions of proof adequate for axiomatizable theories, such as Peano arithmetic, we shall restrict the set of $\infty$-proofs to \emph{regular} ones. An $\infty$-proof is called \emph{regular} if it contains only finitely many non-isomorphic subtrees with respect to the marking of sequents.
We say that a sequent $\Gamma$ is \emph{provable in $\mathsf{S}_n$} if there is a regular $\infty$-proof of $\mathsf{S}_n$ with the root marked by $\Gamma $. 

Let us consider an example of an infinite regular $\infty$-proof of $\mathsf{S}_n$
{\small\begin{gather*}\label{Example3}
\AXC{$\sigma_0$}
\noLine
\UIC{$\vdots$}
\noLine
\UIC{$\overline{\varphi(0)}, \psi , \varphi (0)$}
\AXC{$\pi$}
\noLine
\UIC{$\vdots$}
\noLine
\UIC{$\overline{\varphi(0)}, \psi ,\varphi (x)$}
\LeftLabel{$\mathsf{(weak)}$}
\UIC{$\overline{\varphi(0)}, \psi , \varphi (x) , \varphi (s(x))$}
\AXC{$\sigma_1$}
\noLine
\UIC{$\vdots$}
\noLine
\UIC{$\overline{\varphi(0)}, \psi ,\overline{\varphi (s(x)) }, \varphi (s(x))$}
\LeftLabel{$\mathsf{(\wedge)}$}
\BIC{$\overline{\varphi(0)}, \psi ,  \varphi (x) \wedge \overline{\varphi (s(x)) }, \varphi (s(x))$}
\LeftLabel{$\mathsf{(\exists)}$}
\UIC{$\overline{\varphi(0)}, \psi , \varphi (s(x))$}
\LeftLabel{$\mathsf{(case)}$}
\RightLabel{ ,}
\BIC{$\overline{\varphi(0)},\psi , \varphi (x)$}
\DisplayProof 
\end{gather*}}
where the subtree $\pi$ is isomorphic to the whole $\infty$-proof, $\sigma_0$ and $\sigma_1$ are given by Lemma \ref{phiphi}, $\psi$ is $  \overline{\forall x \; (\varphi (x) \to \varphi (s(x)) )}$ and $\varphi$ is an arbitrary $\Pi_{n+1}$-formula such that $x\in \mathit{FV}(\varphi)$. Here the unique infinite branch passes through the right premiss of the rule $\mathsf{(case)}$ with the active variable $x$ infinitely many times. We immediately see that the required conditions on infinite branches hold if we consider the given branch as its own tail.  

The example above essentially shows that any instance of $\Pi_{n+1}$-induction schema is provable in $\mathsf{S}_n$, so we immediately obtain 

\begin{corollary} \label{sn}
Every theorem of $\mathit{I\Sigma}_{n+1}$ is provable in $\mathsf{S}_n$.     
\end{corollary}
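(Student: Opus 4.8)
The plan is to derive the corollary from the preceding example by showing that the class of sequents provable in $\mathsf{S}_n$ is closed under ordinary first-order reasoning, the key point being that the restriction conditions (d)--(g) constrain only a \emph{tail} of each infinite branch, so that finitely many rule applications placed at the root of a proof are entirely unconstrained. First I would record the two ingredients already available: by the example above, $\mathsf{S}_n$ proves every instance (and hence, after a few $(\forall)$ applications at the root, every universal closure of an instance) of the $\Pi_{n+1}$-induction schema; and by Proposition~\ref{Q} every finite $\SQ$-derivation is an $\infty$-proof of $\mathsf{S}_n$ (trivially regular, and $\Pi_{n+1}$-restricted since it has no infinite branch), so all theorems of $\Q$ are provable in $\mathsf{S}_n$. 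Since $\mathit{I\Sigma}_{n+1}$ is deductively equivalent to $\mathit{I\Pi}_{n+1}$, which over $\Q$ is axiomatized by the $\Pi_{n+1}$-induction instances, it suffices to simulate the passage from these axioms to an arbitrary consequence.

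So let $\phi$ be a theorem of $\mathit{I\Sigma}_{n+1}$; without loss of generality $\phi$ is a sentence (otherwise replace it by its universal closure and strip the quantifiers by $(\forall)$ at the root at the end). By finiteness of proofs there are universal closures $I_1,\dots,I_k$ of $\Pi_{n+1}$-induction instances with $\Q\vdash (I_1\wedge\dots\wedge I_k)\to\phi$. From the example I obtain regular $\mathsf{S}_n$-proofs $P_1,\dots,P_k$ of the one-formula sequents $I_1,\dots,I_k$, and combining them by $(\wedge)$ at the root yields a regular $\mathsf{S}_n$-proof $P$ of $I:=I_1\wedge\dots\wedge I_k$. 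On the logical side, Proposition~\ref{Q} gives a finite $\SQ$-proof of the sequent $\overline I\vee\phi$; by the routine height-preserving invertibility of the rule $(\vee)$ in $\SQ$ this converts into a finite $\SQ$-proof $Q$ of the sequent $\overline I,\phi$, which is an $\mathsf{S}_n$-proof as noted. Finally I weaken $P$ to a proof of $I,\phi$ and apply $(\mathsf{cut})$ on $I$ (with context $\phi$) against $Q$, obtaining a derivation of $\phi$.

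It remains to check that this last tree is again a legitimate $\mathsf{S}_n$-proof, and this is the only step requiring care. For regularity, gluing finitely many regular proofs together with finitely many $(\wedge)$, $(\mathsf{weak})$ and one $(\mathsf{cut})$ node introduces only finitely many new isomorphism types of subtree, so regularity is preserved. For the restriction conditions, the crucial observation is that every infinite branch of the combined tree passes through the finitely many new nodes near the root and then descends into one of the $P_i$ (the proof $Q$ being finite); a tail of such a branch therefore lies entirely inside some $P_i$, where conditions (a)--(g) already hold and are unaffected by the extra formula $\phi$ introduced by weakening, which changes neither the principal, nor the cut, nor the active formulas of any inference of $P_i$. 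In particular the complexity of the cut formula $I$ is irrelevant, precisely because the $(\mathsf{cut})$ node sits below every infinite tail. The main obstacle is exactly to articulate this ``eventual'' character of the soundness conditions cleanly --- that arbitrary, complexity-unrestricted cuts and conjunctions are permitted provided they occur only finitely often along each branch --- since this is what secures closure of $\mathsf{S}_n$ under full logical consequence and thereby upgrades the single-instance content of the example to the full corollary.
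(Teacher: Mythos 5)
Your proof is correct and matches the paper's intended argument: the paper derives the corollary directly from the displayed example, treating closure under finitely many unrestricted inferences at the root (cuts with finite $\SQ$-proofs of $\Q$-consequences, $(\wedge)$, $(\mathsf{weak})$, $(\forall)$) as immediate, precisely because the soundness conditions constrain only tails of infinite branches. You have simply made explicit the regularity and tail-condition checks that the paper's ``so we immediately obtain'' leaves implicit.
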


\section{Cyclic proofs and annotations} \label{cyclic}


In this section, in order to facilitate our treatment of provability in $\mathsf{S}_n$, we introduce annotated
versions of sequents and inference rules of this calculus. We also define finite representations of regular $\infty$-proofs called cyclic (or circular) proofs.

An \emph{annotated sequent} is an expression of the form {$\Gamma \upharpoonright V$}, where $\Gamma$ is an ordinary sequent and $V$ is a finite set of individual variables. 
Annotated versions of initial sequents of $\mathsf{S}_n$ are $\Gamma, t_0=t_1, t_0\neq t_1 \upharpoonright V $ and $\Gamma, s(t)\neq 0 \upharpoonright V$.
The annotated versions of inference rules of $\mathsf{S}_n$, except the rules ($\wedge$), ($\forall$), $\mathsf{(case)}$ and ($\mathsf{cut}$), are obtained by annotating the premiss and the conclusion of a rule so that they have the same annotation $V$. 
We define annotated versions of ($\wedge$), ($\forall$) and $\mathsf{(case)}$ as
\begin{gather*}
\AXC{$\Gamma , \varphi \upharpoonright V_0$}
\AXC{$\Gamma , \psi \upharpoonright V_1$}
\LeftLabel{$\mathsf{(\wedge)}$}
\RightLabel{ ,}
\BIC{$\Gamma , \varphi \wedge \psi \upharpoonright V$}
\DisplayProof \qquad
\AXC{$ \Gamma , \varphi [y\mapsto z] \upharpoonright V_2$}
\LeftLabel{$\mathsf{(\forall)}$}
\RightLabel{($z\nin {\mathit{FV}(\Gamma,\forall y\; \varphi)}$),}
\UIC{$ \Gamma , \forall y\; \varphi \upharpoonright V$}
\DisplayProof \\\\
\AXC{$\Gamma [y\mapsto 0]\upharpoonright \emptyset$}
\AXC{$\Gamma [y\mapsto s(y)]\upharpoonright  V_3$}
\LeftLabel{$\mathsf{(case)}$}
\RightLabel{($y\in \mathit{FV}(\Gamma)$),}
\BIC{$\Gamma \upharpoonright V$}
\DisplayProof
\end{gather*}
where
\begin{gather*}
V_0 \coloneq
\begin{cases}
V & \text{if $\varphi \in \Pi_{n+1}$,}\\
\emptyset & \text{otherwise,}
\end{cases} \qquad
V_1 \coloneq
\begin{cases}
V & \text{if $ \psi\in \Pi_{n+1}$,}\\
\emptyset & \text{otherwise,}
\end{cases} \\\\
V_2 \coloneq
\begin{cases}
V & \text{if $z\nin V$ and $\varphi [y\mapsto z]\in \Pi_{n+1}$,}\\
\emptyset & \text{otherwise,}
\end{cases}\\\\
V_3 \coloneq
\begin{cases}
V \cup \{y\} & \text{if $y$ freely occurs only in $\Pi_{n+1}$-formulas of $\Gamma$,}\\
\emptyset & \text{otherwise.}
\end{cases}
\end{gather*}
The annotated version of ($\mathsf{cut}$) has the form
\begin{gather*}
\AXC{$\Gamma , \varphi \upharpoonright V_0  $}
\AXC{$\Gamma , \overline{\varphi} \upharpoonright V_1  $}
\LeftLabel{$\mathsf{(cut)}$}
\RightLabel{ ,}
\BIC{$\Gamma \upharpoonright V$}
\DisplayProof
\end{gather*}
where
\begin{gather*}
V_0 \coloneq
\begin{cases}
V & \text{if $\varphi \in \Pi_{n+1}$,}\\
\emptyset & \text{otherwise,}
\end{cases} \qquad
V_1 \coloneq
\begin{cases}
V & \text{if $\overline{\varphi} \in \Pi_{n+1}$,}\\
\emptyset & \text{otherwise.}
\end{cases}
\end{gather*}

An \emph{annotated $\infty$-proof of $\mathsf{S}_n$} is a (possibly infinite) tree whose nodes are marked
by annotated sequents and that is constructed according to the annotated versions of inference rules of $\mathsf{S}_n$. In addition, all leaves of an annotated $\infty$-proof are marked by annotated initial sequents, and every infinite branch in it must contain a tail satisfying the following conditions:
(i) there are no sequents in the tail annotated with $\emptyset$ and
(ii) there is a variable $y$ such that the tail passes through the right premiss of the rule $\mathsf{(case)}$ with the active variable $y$ infinitely many times. An annotated $\infty$-proof is \emph{regular} if it contains only finitely many non-isomorphic subtrees with respect to annotations.

Note that if we erase all annotations in an annotated $\infty$-proof of the system $\mathsf{S}_n$, then the resulting tree is a normal $\infty$-proof of the same system, i.e. a $\Pi_{n+1}$-restricted $\infty$-proof. Let us prove the converse.

\begin{lemma}\label{AnnLem}
{Suppose $\pi$ is an $\infty$-proof of $\mathsf{S}_n$ with the root marked by $\Gamma$ and $V$ is a finite set of individual variables. Then  $\pi$ can be annotated so that the root of the resulting tree is marked by $\Gamma \upharpoonright V$. Moreover, the obtained annotated $\infty$-proof is finite (regular) if $\pi $ is finite (regular).}
\end{lemma}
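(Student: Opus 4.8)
The plan is to produce the annotation deterministically, propagating the given root annotation $V$ \emph{upward}, from conclusions to premisses. Inspecting the annotated rules, one sees that the annotation of each conclusion uniquely determines the annotations of its premisses: for every rule except $(\wedge)$, $(\forall)$, $(\mathsf{cut})$ and $\mathsf{(case)}$ the premiss simply inherits $V$, while for those four rules the premiss annotations are the prescribed sets $V_0,V_1,V_2,V_3$. Thus, starting from $\Gamma\upharpoonright V$ at the root, I would assign to every node of $\pi$ the unique annotation obtained by this downward propagation. Erasing annotations returns $\pi$ verbatim, and each leaf of $\pi$, being an initial sequent, becomes an annotated initial sequent $\Gamma,t_0=t_1,t_0\neq t_1\upharpoonright V$ or $\Gamma,s(t)\neq 0\upharpoonright V$ for the $V$ it received. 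What then remains is to check preservation of finiteness/regularity and, crucially, the global condition on infinite branches.

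Finiteness is immediate, since the underlying tree is unchanged. For regularity, recall that a regular $\pi$ has only finitely many non-isomorphic subtrees, so only finitely many sequents label its nodes, and hence only finitely many variables occur in $\pi$; therefore the annotations range over the finite family of subsets of these variables. Because annotations propagate downward deterministically, the annotated subtree rooted at a node $N$ is completely determined by the unannotated subtree rooted at $N$ together with the single annotation assigned to $N$. As both data range over finite sets, there are only finitely many annotated subtrees up to isomorphism, so the annotated tree is regular.

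The substance is verifying that every infinite branch has a tail satisfying (i) and (ii). Fix such a branch and, using that $\pi$ is $\Pi_{n+1}$-restricted, choose a tail $T$ meeting conditions (a)--(g) (read with $\Pi_{n+1}$). Going upward along $T$, the annotation of a premiss either equals that of its conclusion, is enlarged to $V\cup\{y\}$ at a right premiss of $\mathsf{(case)}$, or drops to $\emptyset$. I would first observe that a drop to $\emptyset$ is ruled out for all rules except $(\forall)$: condition (a) excludes left premisses of $\mathsf{(case)}$, conditions (d) and (f) secure the $\Pi_{n+1}$ side conditions at $(\wedge)$ and $(\mathsf{cut})$, and (g) guarantees that a right premiss of $\mathsf{(case)}$ receives $V\cup\{y\}\neq\emptyset$. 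At a $(\forall)$-node condition (e) yields $\varphi[y\mapsto z]\in\Pi_{n+1}$, so the annotation drops to $\emptyset$ \emph{exactly} when the active variable $z$ belongs to the current annotation.

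The key step is then to show that this event occurs at most once along $T$. Suppose the annotation drops at a $(\forall)$-node with active variable $z$, so $z$ lies in the current annotation. It cannot have been placed there by a right premiss of $\mathsf{(case)}$ occurring earlier in $T$, since by (b) the tail meets no later $(\forall)$ with active variable $z$; hence $z$ was inherited into $T$ from below. As the drop resets the whole annotation to $\emptyset$, all inherited variables are erased simultaneously, and thereafter the annotation can contain only variables introduced by $\mathsf{(case)}$-nodes inside $T$, none of which can trigger a further drop, again by (b). Thus $T$ has at most one drop. Finally, by (c) some variable $y$ is active in infinitely many right premisses of $\mathsf{(case)}$ in $T$, so such a node occurs after the (possible) drop; I take $T'$ to be the tail of $T$ beginning at one such right premiss following the drop. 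Its initial annotation contains $y$, no further drop occurs, and every rule keeps the annotation non-empty, giving (i); condition (ii) holds since that $\mathsf{(case)}$ still fires infinitely often in $T'$. The one delicate point is precisely the at-most-one-drop analysis, where I rely on (b) together with the fact that a single drop clears every inherited variable at once.
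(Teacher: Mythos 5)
Your proposal is correct and follows essentially the same route as the paper's proof: deterministic upward propagation of annotations, a finiteness count on possible annotations for regularity, and the use of conditions (a)--(g) --- above all (b), which prevents a variable introduced by a $\mathsf{(case)}$ inside the tail from being active in a later $(\forall)$ --- to extract a tail $T'$ with everywhere non-empty annotations. Your ``at most one drop'' formulation is just a repackaging of the paper's argument, which instead starts $T'$ at the right premiss of the first $\mathsf{(case)}$ above a node with empty annotation and proves non-emptiness by induction along $T'$; the two analyses rest on the identical observation.
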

\begin{proof}
Note that, for any application of an inference rule of $\mathsf{S}_n$ and any annotation of its conclusion, one can annotate its premisses and obtain an application of the annotated version of the rule. Moreover, the choice of annotations for the premisses is unique. 

Now assume we have an  $\infty$-proof $\pi$ of $\mathsf{S}_n$ with the root marked by a sequent $\Gamma$. We annotate $\Gamma$ in $\pi$ with the set $V$ and, moving upwards away from the root, replace all applications of inference rules in $\pi$ with their annotated versions. Let us check that the given tree, denoted by $\pi^\prime$, is an annotated $\infty$-proof. Consider an arbitrary infinite branch of $\pi^\prime$. We must show that there exists a tail $T'$ of the branch satisfying the conditions: 
\ben 
\item[(i)] there are no sequents in the tail annotated with $\emptyset$; 
\item[(ii)] there is a variable $y$ such that $T'$ passes through the right premiss of the rule $\mathsf{(case)}$ with the active variable $y$ infinitely many times.
\een 

By the definition of $\infty$-proof, the given branch of $\pi$ contains a tail $T$ satisfying conditions (a)--(g).  By condition (c), there is a variable $y$ such that the tail passes through the right premiss of the rule $\mathsf{(case)}$ with the active variable $y$ infinitely many times. If on $T$ the annotation $V$ is never empty, we put $T':=T$. Otherwise, there is a node $h$ of $T$ where $V=\emptyset$, then we can find the first application of the rule $\mathsf{(case)}$ above $h$ in $T$. By condition (a), the tail $T$ passes through the right premiss $\Delta$ of that rule application. We also see that the conclusion of this application is annotated by $\emptyset$ and the annotation of $\Delta$ is non-empty. We denote by $T'$ the end-part of $T$ starting from $\Delta$. Note that all annotations $V$ in $T'$ consist of the active variables of applications of the rule $\mathsf{(case)}$ in $T'$: all other rule applications do not increase the set $V$. Also, $T'$ trivially satisfies condition (ii) by (c). 

Next we show by induction along $T'$ that all annotations $V$ in $T'$ are non-empty. We have already established the basis of induction. The induction step for all the rules except $\mathsf{(\land),(\forall),(cut),(case)}$ is trivial ($V$ does not change). The induction step for the rules $\mathsf{(\land),(cut),(case)}$ follows from the $\Pi_{n+1}$-restriction of $\pi$, that is, by conditions (d)--(g). The annotation $V$ remains the same for $\mathsf{(\land),(cut)}$ and may only grow for $\mathsf{(case)}$. Finally, if a sequent is obtained by $\mathsf{(\forall)}$ with an active variable $z$ and a principal formula $\al{y}\phi$, then by (e) the formula $\phi[y\mapsto z]$ is $\Pi_{n+1}$. Moreover, $z\nin V$. Otherwise, there is an application of $\mathsf{(case)}$ with the active variable $z$ in $T'$ below this point, which contradicts condition (b). Hence, this rule application also preserves $V$. 

Thus, we have established that $T'$ satisfies conditions (i) and (ii), hence $\pi^\prime$ is an annotated $\infty$-proof. Trivially, $\pi^\prime$ is finite if $\pi$ is finite.

Now let us assume that the $\infty$-proof $\pi$ is regular. We show that $\pi^\prime$ is also regular by \emph{reductio ad absurdum}. Suppose there is an infinite sequence of pairwise non-isomorphic subtrees of $\pi^\prime$. Since $\pi^\prime$ is obtained from a regular $\infty$-proof, there are only finitely many non-isomorphic subtrees disregarding annotations. Therefore, there is a subsequence $(\mu_i)_{i\in \mathbb{N}}$ of the given sequence, where all members are isomorphic disregarding annotations. We see that the roots of $(\mu_i)_{i\in \mathbb{N}}$ are marked by non-identical annotated sequents obtained from a single unannotated sequent $\Delta$. However, any annotation occurring in $\pi^\prime$ is a subset of the union of $V$ and the finite set of variables that are active in applications of the inference rule $\mathsf{(case)}$ in $\pi^\prime$. Consequently, there can be only finitely many non-identical annotated sequents obtained from $ \Delta$ in $\pi^\prime$, which is a contradiction. We conclude that the annotated $\infty$-proof $\pi^\prime$ is regular.
\end{proof}

A \emph{cyclic annotated proof of the system $\mathsf{S}_n$} is a pair $(\kappa, d)$, where $\kappa$ is a finite tree of annotated sequents constructed in accordance with the annotated versions of inference rules of $\mathsf{S}_n$ and $d$ is a function with the following properties: the function $d$ is defined on the set of all leaves of $\kappa$ that are not marked by initial sequents; the image $d(a)$ of a leaf $a$ lies on the path from the root of $\kappa$ to the leaf $a$ and is not equal to $a$; $d(a)$ and $a$ are marked by the same sequents; all sequents on the path from $d(a)$ to $a$, including the ends, have the same non-empty annotation $V$; the path from $d(a)$ to $a$ intersects an application of the rule $\mathsf{(case)}$ on the right premiss. If the function $d$ is defined at a leaf $a$, then we say that nodes $a$ and $d(a)$ are \emph{connected by a back-link}. 

{For example, consider a cyclic annotated proof of the system $\mathsf{S}_n$
{\scriptsize\begin{gather*}\label{Example2}
\AXC{$\pi_0$}
\noLine
\UIC{$\vdots$}
\noLine
\UIC{$\overline{\varphi(0)}, \psi , \varphi (0) \upharpoonright \emptyset$}
\AXC{$\overline{\varphi(0)}, \psi ,\varphi (x)\upharpoonright \{x\} \tikzmark{A}$}
\LeftLabel{$\mathsf{(weak)}$}
\UIC{$\overline{\varphi(0)}, \psi , \varphi (x) , \varphi (s(x))\upharpoonright \{x\}$}
\AXC{$\pi_1$}
\noLine
\UIC{$\vdots$}
\noLine
\UIC{$\overline{\varphi(0)}, \psi ,\overline{\varphi (s(x)) }, \varphi (s(x))\upharpoonright \emptyset$}
\LeftLabel{$\mathsf{(\wedge)}$}
\BIC{$\overline{\varphi(0)}, \psi ,  \varphi (x) \wedge \overline{\varphi (s(x)) }, \varphi (s(x))\upharpoonright \{x\}$}
\LeftLabel{$\mathsf{(\exists)}$}
\UIC{$\overline{\varphi(0)}, \psi , \varphi (s(x)) \upharpoonright \{x\}$}
\LeftLabel{$\mathsf{(case)}$}
\RightLabel{ ,}
\BIC{$\overline{\varphi(0)},\psi , \varphi (x) \upharpoonright \{x\} \tikzmark{B}$}
\DisplayProof 
\begin{tikzpicture}[overlay,remember picture,>=latex,distance=8.0cm] \draw[->, thick] (pic cs:A) to [out=18,in=-10] (pic cs:B);
\end{tikzpicture}
\end{gather*}}
where $\varphi$ is an arbitrary $\Pi_{n+1}$-formula such that $x\in \mathit{
FV(\varphi)}$ and $\overline{\varphi (s(x))} \nin\Pi_{n+1}$, $\pi_0$ and $\pi_1$ are finite annotated $\infty$-proofs given by Lemma \ref{phiphi} and Lemma \ref{AnnLem}, and $\psi$ equals to $  \overline{\forall x \; (\varphi (x) \to \varphi (s(x)) )}$.}

Obviously, every cyclic annotated proof can be unravelled into a regular one. We prove the converse.
\begin{lemma}\label{CyclLem}
Any regular annotated $\infty$-proof of the system $\mathsf{S}_n$ can be obtained by unravelling of a cyclic annotated proof of the same system.
\end{lemma}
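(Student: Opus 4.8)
The plan is to \emph{fold} the given regular annotated $\infty$-proof $\pi$ into a finite tree $\kappa$ and to route a back-link from each cut node to an earlier node with an isomorphic subtree. Since $\pi$ is regular it has only finitely many non-isomorphic subtrees, and, as observed in the proof of Lemma~\ref{AnnLem}, every annotation occurring in $\pi$ is a subset of the union of the root annotation with the finite set of variables active in applications of $\mathsf{(case)}$; hence both the subtree-isomorphism types and the annotations range over finite sets. I build $\kappa$ as an initial (finite) subtree of $\pi$: starting from the root I expand each node, but I declare a node $a$ to be a \emph{bud} with $d(a):=b$ as soon as there is a proper ancestor $b$ of $a$ on the current path such that the subtree of $\pi$ rooted at $a$ is isomorphic (annotations included) to the one rooted at $b$, the path from $b$ to $a$ carries a constant non-empty annotation $V$, and that path crosses a right premiss of $\mathsf{(case)}$. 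By construction every such pair $(b,a)$ satisfies exactly the defining clauses of a cyclic annotated proof of $\mathsf{S}_n$.

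The heart of the argument is to show that this procedure terminates, i.e.\ that $\kappa$ is finite. Suppose not; then by K\"onig's lemma $\kappa$ contains an infinite branch, which is also a branch of $\pi$ and therefore has a tail $T'$ meeting conditions (i) and (ii). I claim the annotation is non-decreasing along $T'$. Indeed, inspecting the annotated rules, the annotation of a premiss is either equal to that of its conclusion, or is $\emptyset$, or --- only at the right premiss of $\mathsf{(case)}$ with active variable $y$ --- equals $V\cup\{y\}\supseteq V$. Condition (i) forbids the value $\emptyset$ on $T'$, so each step up the branch preserves or enlarges the annotation. As annotations live in a finite lattice, the annotation stabilises at some non-empty $V^{*}$ on a further tail $T''$. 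By (ii), $T''$ still passes through infinitely many right premisses of $\mathsf{(case)}$, whose active variable must already lie in $V^{*}$ since the annotation no longer grows.

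Now along $T''$ the annotation is constantly $V^{*}\neq\emptyset$, there are infinitely many right-premiss-of-$\mathsf{(case)}$ positions, but only finitely many subtree-isomorphism types. By the pigeonhole principle two such positions $b<a$ carry isomorphic annotated subtrees; the segment from $b$ to $a$ then has constant non-empty annotation $V^{*}$ and crosses (indeed ends at) a right premiss of $\mathsf{(case)}$. Hence the bud condition is met at $a$, so the procedure would have cut this branch at $a$ (or earlier), contradicting that the branch is infinite. Therefore $\kappa$ is finite.

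It remains to verify that unravelling $(\kappa,d)$ returns $\pi$. Since $\kappa$ is an initial subtree of $\pi$ and each bud $a$ satisfies $d(a)=b$ with the subtrees of $\pi$ at $a$ and $b$ isomorphic, unravelling --- which replaces each bud $a$ by the subtree of $\kappa$ hanging from $d(a)$ --- reproduces step by step the same rule applications and annotations as in $\pi$; a routine coinductive comparison shows the unravelling is isomorphic to $\pi$. I expect the main obstacle to be precisely the termination step: guaranteeing a legitimate back-link point requires the monotonicity-and-stabilisation analysis of annotations along a tail satisfying (i) and (ii), which is exactly what forces the loop to have a constant non-empty annotation as demanded by the definition of a cyclic annotated proof.
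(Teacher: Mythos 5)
Your proof is correct, and it reaches the paper's goal by a genuinely different organization of the same two pillars (finitely many annotated-isomorphism types by regularity, and the behaviour of annotations along $\emptyset$-free tails). The paper sets $m$ to be the number of non-isomorphic annotated subtrees, applies the pigeonhole principle to \emph{every} path of length $m$ from the root to obtain a pair $b,c$ with isomorphic subtrees, and cuts at $c$; finiteness of the resulting tree is then immediate (depth at most $m$), and the whole burden falls on showing that \emph{any} such pair automatically satisfies the back-link conditions. This is justified by an unfolding trick you never use: since the subtrees at $b$ and $c$ are isomorphic, the segment from $b$ to $c$ can be iterated into a periodic infinite branch of $\pi$, whose tail must satisfy conditions (i) and (ii), so an $\emptyset$-annotation or a missing right premiss of $\mathsf{(case)}$ inside the segment is impossible, and constancy of the annotation follows from monotonicity together with equality of the endpoint annotations. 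You invert this division of labour: you create a back-link only when the full side conditions already hold, so correctness of each link is free, and instead you must prove termination, which you do via K\"onig's lemma plus the monotonicity-and-stabilization analysis along a genuine compliant tail, applying pigeonhole to the infinitely many right-premiss-of-$\mathsf{(case)}$ nodes on it. The paper's version buys an explicit depth bound $m$ on the cyclic proof; yours localizes all verification at the moment of linking and avoids reasoning about a synthetic periodic branch, at the price of a less explicit size bound. One minor remark: your appeal to Lemma~\ref{AnnLem} for finiteness of the annotation lattice is a small detour --- regularity of the annotated proof directly yields only finitely many distinct annotated sequents, hence finitely many annotations, in $\pi$ --- but this does not affect correctness.
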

\begin{proof}
Assume we have a regular annotated $\infty$-proof $\pi$ of the system $\mathsf{S}_n$. Notice that each node $a$ of this tree determines the subtree $\pi_a$ with the root $a$. Let $m$ denote the number of non-isomorphic subtrees
of $\pi$. Consider any simple path $a_0, a_1, \dotsc , a_m$ in $\pi$ that starts at the root of $\pi$ and has length $m $. This path defines the sequence of subtrees $\pi_{a_0},\pi_{a_1}, \dotsc,\pi_{a_m}$. Since $\pi$ contains precisely $m$ non-isomorphic subtrees, the path contains a pair of different nodes $b$ and $c$ determining isomorphic subtrees $\pi_b$ and $\pi_c$. Without loss of generality, assume that $c$ is farther from the root than $b$. Note that the path from $b$ to $c$ intersects an application of the rule $\mathsf{(case)}$ on the right premiss and all sequents on this path have the same non-empty annotation $V$, because otherwise there is an infinite branch in $\pi$ that violates the corresponding condition on infinite branches of annotated $\infty$-proofs. We cut the path $a_0, a_1, \dotsc , a_m$ at the node $c$ and connect $c$, which has become a leaf, with $b$ by a back-link. By applying a similar operation to each of the remaining paths of length $m $ that start at the root, we ravel the regular annotated $\infty$-proof $\pi$ into the desired cyclic annotated one.
\end{proof}

\section{From cyclic proofs to ordinary ones} \label{uncycle}

In this section, we prove the converse to Corollary~\ref{sn}. 

\begin{theorem} \label{main1}
If $\mathsf{S}_n \vdash \Gamma$, then $\mathit{I \Sigma}_{n+1} \vdash \bigvee \Gamma$.
\end{theorem}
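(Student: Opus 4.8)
The plan is to make precise the slogan, already visible in the example preceding Corollary~\ref{sn}, that \emph{a back-link is a disguised application of induction}, and to turn the cyclic proof into an ordinary derivation by discharging back-links one at a time. First I would invoke Lemma~\ref{CyclLem} (together with Lemma~\ref{AnnLem}) to replace the regular annotated $\infty$-proof witnessing $\mathsf{S}_n\vdash\Gamma$ by a finite cyclic annotated proof $(\kappa,d)$ with root $\Gamma\upharpoonright V$. The argument then runs by induction on the number of back-links of $\kappa$. If there are none, $\kappa$ is a finite $\SQ$-derivation, so Proposition~\ref{Q}(2) gives $\Q\vdash\bigvee\Gamma$ and hence $\mathit{I\Sigma}_{n+1}\vdash\bigvee\Gamma$.

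To expose the mechanism I would first treat the model case of a single companion $b$, with sequent $\Delta\upharpoonright V$, all of whose buds point back to $b$. Cutting these back-links turns the subtree $\kappa_b$ into a finite $\SQ$-derivation $D$ whose only non-axiom leaves are copies of $\Delta\upharpoonright V$. By the definition of a cyclic proof, every branch of $D$ from $b$ to a bud crosses the right premiss of a $\mathsf{(case)}$ on some active variable $y\in V$, and $V$ is preserved throughout. The decisive point is the complexity bookkeeping enforced by the annotation: since $y\in V$, the side condition $V_3$ of the annotated $\mathsf{(case)}$ forces $y$ to occur freely only in $\Pi_{n+1}$-formulas of $\Delta$; therefore $\bigvee\Delta$, read as a formula of $y$ with the remaining ($y$-free) disjuncts regarded as parameters, is, up to that parameter part, a $\Pi_{n+1}$-formula, which is exactly the induction class available in $\mathit{I\Sigma}_{n+1}=\mathit{I\Pi}_{n+1}$. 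Reading $D$ through the soundness of the $\SQ$-rules (Proposition~\ref{Q}, extended to derivations from hypotheses), one obtains in $\Q$ the step $\forall y\,(\bigvee\Delta\to\bigvee\Delta[y\mapsto s(y)])$ from the sub-derivation of the right premiss, and the base $\bigvee\Delta[y\mapsto 0]$ from the left premisses of the relevant $\mathsf{(case)}$ applications; the $\Pi_{n+1}$-induction schema then yields $\mathit{I\Sigma}_{n+1}\vdash\forall y\,\bigvee\Delta$. Replacing the subtree $\kappa_b$ by a single leaf recording this now-established sequent removes all back-links to $b$; the induction hypothesis applied to the resulting figure, followed by cuts against $\bigvee\Delta$, completes the step.

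The hard part is passing from this model case to loops that share nodes. A single companion may carry several back-links whose paths descend on \emph{different} variables of $V$, and a companion of maximal depth may still be the target of ``escaping'' back-links landing strictly below it, so that cutting the internal links does not leave a self-contained $D$ of the kind above. To cope with this I would either (i) choose $b$ so that no bud of $\kappa_b$ points strictly below $b$ and induct on a combined measure of the variables of $V$ — say their maximum, handled by the least-number principle, which is again confined to the $\Pi_{n+1}$-annotated formulas and hence stays within $\mathit{I\Sigma}_{n+1}$ — or (ii) prove the strengthened statement that $\mathit{I\Sigma}_{n+1}\vdash\bigvee\Delta_b$ for \emph{all} companions $b$ simultaneously, with the external induction on the values of the traced variables mirroring the descent guaranteed by condition (ii) in the definition of an annotated $\infty$-proof.

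The genuine technical obstacle, in either route, is aligning the substitutions produced by $\mathsf{(case)}$ with the measure on which one inducts, so that each traversal of a loop is certified to strictly decrease the inducted quantity while leaving the companion sequents fixed; the preservation of the annotation $V$ along loops is precisely what makes this alignment possible. Once the annotation is used to certify that every traced formula is $\Pi_{n+1}$, the only induction ever invoked is $\Sigma_{n+1}$- (equivalently $\Pi_{n+1}$-) induction, so no principle stronger than $\mathit{I\Sigma}_{n+1}$ is required, which is exactly the bound asserted in Theorem~\ref{main1}.
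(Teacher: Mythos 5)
Your reduction via Lemmas~\ref{AnnLem} and \ref{CyclLem} and your treatment of the self-contained single-companion loop are sound and match the paper's key observation: on a back-link path the annotation $V$ is non-empty, so the $\mathsf{(case)}$ variable occurs freely only in $\Pi_{n+1}$-formulas, and the remaining $y$-free disjuncts can be carried as parameters, keeping the induction within $\mathit{I\Sigma}_{n+1}$. But the case you defer to options (i)/(ii) --- overlapping loops through shared nodes with different descent variables --- is the actual content of the theorem, and neither option as stated closes the gap. Option (i) fails concretely: the \emph{maximum} of the traced variables is not a progress measure, since one traversal of a loop decreases a single $\mathsf{(case)}$ variable by one while leaving the others untouched, so the maximum can remain constant and the least-number principle applied to it certifies no descent. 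Option (ii) is the right shape but is precisely what has to be constructed. The paper's construction is: take $M$ to be the set of nodes with a directed path (through back-links) to the root; prove that all non-$\Pi_{n+1}$ side parts $\varphi_a$, $a\in M$, are logically equivalent (Lemma~\ref{side_equiv}); form the single $\Pi_{n+1}$-formula $\theta:={\forall}_B\bigwedge_{c\in C}\psi_c$ ranging over \emph{all} $\mathsf{(case)}$-conclusions $c\in C$ in $M$ at once; show $\theta$ implies every sequent of $M$ by induction on the distance to $C$ (Lemma~\ref{theta_gamma}); and then induct not on a maximum but on the \emph{sum}, via $\zeta(z):=\forall y_1\leq z\dots\forall y_m\leq z\,(y_1+\dotsb+y_m=z\rightarrow\theta)$, since a loop traversal decreases the sum by exactly one (Lemma~\ref{zeta_ind}). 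Note also that the paper's outer induction is on the number of nodes, not back-links, which is what lets it absorb side subtrees hanging off the loop (including left premisses of $\mathsf{(case)}$, annotated $\emptyset$) by the same induction hypothesis, without introducing proofs from assumptions as your ``replace $\kappa_b$ by an established leaf'' step implicitly does.

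A second genuine gap is the eigenvariable discipline in your model case. Your implication chain from the bud down to the right premiss, yielding $\bigvee\Delta\rightarrow\bigvee\Delta[y\mapsto s(y)]$ in $\Q$, composes through $(\forall)$ inferences on the loop; at such an inference with active variable $z$ the step is sound only if $z\nin\mathit{FV}(\Delta)$, and nothing in the definitions guarantees this: the annotation condition for $(\forall)$ only ensures $z\nin V$ and that the principal formula is $\Pi_{n+1}$, while $z$ may well occur free in the companion sequent. This is exactly why the paper's $\theta$ carries the quantifier prefix ${\forall}_B$ over the eigenvariables $B$ of the $(\forall)$ inferences in $M$, and why Lemma~\ref{zeta_ind} must separately verify that the side part $\varphi_{r(\pi)}$ does not depend on the variables of $B\cup\{z,y_1,\dotsc,y_m\}$ (the $\Pi_{n+1}$ parts $\psi_c$ \emph{may} depend on them, so the quantifiers must live inside the induction formula). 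Your proposal never addresses this dependency, and without it the induction step you extract from the loop is not a valid instance of the $\Pi_{n+1}$-schema with parameters.
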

\begin{proof}
By Lemma \ref{AnnLem} and Lemma \ref{CyclLem}, it is sufficient to prove that $\mathit{I \Sigma}_{n+1} \vdash \bigvee \Gamma$ whenever there exists a cyclic annotated proof $\pi = (\kappa, d)$ of a sequent $\Gamma \upharpoonright V$ in the system $\mathsf{S}_n$ for some finite set of individual variables $V$. We proceed by induction on the number of nodes in $\pi$ (we call this \emph{main induction} in what follows). 

Case 1. Suppose that there are no leaves of $\pi$ connected by back-links with the root. If $\pi$ consists of a single node, then $\Gamma \upharpoonright V$ has the form $\Delta, t_0 = t_1, t_0 \neq t_1 \upharpoonright V$ or $\Delta, s(t) \neq 0 \upharpoonright V$. Therefore, we obtain $\mathit{I \Sigma}_{n+1} \vdash \bigvee \Gamma$. 
Otherwise, consider the last application of an inference rule in $\pi$. 

Since the subcases of all inference rules are similar, we consider only the subcase of the rule $\mathsf{(case)}$. Suppose $\pi$ has the form
\[
\AXC{$\pi^\prime$}
\noLine
\UIC{$\vdots$}
\noLine
\UIC{$\Gamma[y \mapsto 0] \upharpoonright \emptyset$}
\AXC{$\pi^{\prime\prime}$}
\noLine
\UIC{$\vdots$}
\noLine
\UIC{$\Gamma[y \mapsto s(y)] \upharpoonright V^{\prime\prime}$}
\LeftLabel{$\mathsf{case}$}
\RightLabel{$(y \in FV(\Gamma))$, }
\BIC{$\Gamma \upharpoonright V$}
\DisplayProof
\]
where $\pi^\prime$ and $\pi^{\prime\prime}$ are cyclic annotated proofs in $\mathsf{S}_n$. Applying the induction
hypothesis for $\pi^\prime$ and $\pi^{\prime\prime}$, we obtain $\mathit{I \Sigma}_{n+1} \vdash \bigvee \Gamma[y \mapsto 0]$ and $\mathit{I \Sigma}_{n+1} \vdash \bigvee \Gamma[y \mapsto s(y)]$. Consequently, $\mathit{I \Sigma}_{n+1} \vdash \bigvee \Gamma$ since $\mathit{I \Sigma}_{n+1} \vdash \forall y\; (y = 0 \vee \exists z\; y = s(z))$.

Case 2. Suppose that there is a leaf of $\pi = (\kappa, d)$ connected by a back-link with the
root. In this case, all sequents on the path from the root to the leaf have the same non-empty annotation $V$.

In what follows, a \emph{directed path} is a path along the edges of $\kappa$, directed away from the root, and back-links.
Let $M$ denote the following set of nodes of $\pi$: $a \in M$ if and only if there is a finite directed path from $a$ to the root of $\pi$. Note that, for any $a \in M$, the sequent of the node $a$ has the form $\Phi_a, \Psi_a  \upharpoonright V$, where $\Psi_a$ is the multiset of all $\Pi_{n+1}$-formulas of the sequent. Since $V$ is the same for all $a\in M$ we consider it fixed for the rest of the proof. We put $\Gamma_a \coloneq \Phi_a, \Psi_a$ and $\varphi_a \coloneq \overline{\bigvee \Phi_a}$, $\psi_a \coloneq \bigvee \Psi_a$. In addition, we denote the root of $\pi$ by $r(\pi)$.

We proceed with the following lemma.

\begin{lemma}\label{side_equiv}
For any two nodes $a,b\in M$, the formulas $\varphi_a$ and $\varphi_b$ are equivalent in predicate logic:
\begin{gather*}
\mathit{\vdash \varphi_a \leftrightarrow \varphi_b.}
\end{gather*}
\end{lemma}
\begin{proof}[Proof of Lemma]
It is sufficient to prove that $ \vdash \varphi_a \rightarrow \varphi_b$ as $a, b$ are arbitrary elements of $M$. By the definitions of $\varphi_a,  \varphi_b$ and the principle of contraposition, it is sufficient to demonstrate that $\vdash \bigvee \Phi_b \rightarrow \bigvee \Phi_a$.

Observe that a finite directed path exists between $a$ and $b$, since there is a finite directed path from $a$ to the root $r(\pi)$, as well as a finite directed path from the root to any other node in $M$. We continue the argument by induction on the length of the shortest directed path between $a$ and $b$.

\textsc{Basis of induction:} If $a = b$, then $\Phi_a = \Phi_b$, hence the result trivially holds.

\textsc{Induction step:} Let us denote by $a'$ the next node on the directed path from $a$ to $b$. 

If $a$ is a leaf of $\pi$ and $d(a) = a^\prime$, then $\Phi_a = \Phi_{a^\prime}$. By the induction hypothesis, we also have $\vdash \bigvee \Phi_b \rightarrow \bigvee \Phi_{a^\prime}$. Hence,  $\vdash \bigvee \Phi_b \rightarrow \bigvee \Phi_a$.

Otherwise, the sequent at $a$ is derived by applying some inference rule to the sequent of node $a^\prime$ and possibly some other sequent. 
Clearly, $a^\prime$ is an element of $M$, hence $a^\prime$ is annotated with $V$.
By the induction hypothesis we have $\vdash \bigvee \Phi_b \rightarrow \bigvee \Phi_{a'}$, therefore it is sufficient to demonstrate that 
\begin{equation*}\label{claim} \vdash \bigvee \Phi_{a^\prime} \rightarrow \bigvee \Phi_a.
\end{equation*}
We prove this claim by analyzing various cases according to different inference rules.

Case A. Most of the rules do not change the $\Phi$ part of the sequent. This holds for the rules $\mathsf{(ref)}$, $\mathsf{(rep)}$, $\mathsf{(add_0)}$, $\mathsf{(add_s)}$, $\mathsf{(mult_0)}$, $\mathsf{(mult_s)}$, $\mathsf{(pred)}$:  In all these cases $\Gamma_a$ and $\Gamma_{a'}$ can only differ in atomic formulas.  

Case B. If $\Gamma_a$ is obtained from $\Gamma_{a'}$ by $\mathsf{(weak)}$, then $\Phi_{a'}\subseteq \Phi_a$ and the claim trivially follows. 

Case C. Suppose $\Gamma_{a^\prime}$ is a premiss of an application of the rule ($\mathsf{\vee}$) and $\Gamma_a$ is its conclusion. Let $\alpha \vee \beta$ be its principal formula. If $\alpha \vee \beta \nin \Pi_{n+1}$, then $\alpha\nin \Pi_{n+1} $ or $\beta\nin \Pi_{n+1} $. We have $\vdash \bigvee \Phi_{a^\prime} \rightarrow \bigvee \Phi_a$ since $\vdash\alpha\to \alpha \vee \beta$ and $\vdash\beta\to \alpha \vee \beta$. Otherwise, $\alpha \vee \beta$ is $\Pi_{n+1}$ and so are $\alpha$ and $\beta$ (by the definition of $\Pi_{n+1}$), hence $\Phi_{a^\prime} = \Phi_a$.

Case D. Suppose $\Gamma_a$ is obtained from $\Gamma_{a'}$ by $\mathsf{(\exists)}$. If $\phi[y\mapsto t]$ is $\Pi_{n+1}$ then $\Phi_{a'}=\Phi_a$. Otherwise, both $\phi[y\mapsto t]$ and $\ex{y}\phi$ belong to $\Phi_{a'}$. We observe that $\vdash \phi[y\mapsto t]\to \ex{y}\phi$, hence  $\vdash \bigvee \Phi_{a^\prime} \rightarrow \bigvee \Phi_a$. 

The remaining rules are handled somewhat similarly by using the annotation conditions. 

Case E. Suppose $\Gamma_a$ is obtained from $\Gamma_{a^\prime}$ by $\mathsf{(\forall)}$. As these sequents have the same non-empty annotation $V$, the principal formula has to be $\Pi_{n+1}$. Hence, in this case $\Phi_{a^\prime} = \Phi_a$.

Case F. Suppose $\Gamma_{a^\prime}$ is a premiss of an application of $\mathsf{(\wedge)}$ and $\Gamma_a$ is its conclusion. As these sequents have the same non-empty annotation $V$, $\Phi_{a^\prime} \subseteq \Phi_a$ and the claim follows.

Case G. Suppose $\Gamma_{a^\prime}$ is a premiss an application of  $\mathsf{(cut)}$ and $\Gamma_a$ is its conclusion. As these sequents have the same non-empty annotation $V$ the cut formula has to be $\Pi_{n+1}$, therefore $\Phi_{a^\prime} = \Phi_a$.

Case H. Suppose $\Gamma_{a^\prime}$ is the right premiss of an application of  $\mathsf{(case)}$ and $\Gamma_a$ is its conclusion. As the annotation $V$ of these sequents is non-empty, we conclude that the active variable in the application of the rule $\mathsf{(case)}$ is permitted to occur freely only within the $\Pi_{n+1}$-formulas in $\Gamma_a$. It follows that $\Phi_{a^\prime} = \Phi_a$.

This concludes the proof of Lemma~\ref{side_equiv}.
\end{proof}

Consider the set of all applications of the rule ($\mathsf{\forall}$) in $\pi$ whose conclusions belong to $M$. By $\mathit{B}$, we denote the set of active variables of these applications.
We also set 
\[\theta \coloneq {\forall}_\mathit{B} \bigwedge_{c \in C} \psi_c,\]
where ${\forall}_\mathit{B}$ means the finite sequence of universal quantifiers for variables of $\mathit{B}$ and $C$ is the set of conclusions of applications of the rule $\mathsf{(case)}$ in $M$. Note that $\theta$ belongs to $\Pi_{n+1}$. 

For any $a \in M$, we define its rank $\mathit{rk} (a)$ as the length of the longest directed path from $a$ to a node $c$ from $C$ such that only the last node of the path belongs to $C$. Note that $\mathit{rk} (c) = 0$ if $c \in C$. The notion of rank is well-defined since any infinite directed path in $\pi$ must intersect applications of the rule $\mathsf{(case)}$. The following lemma demonstrates that all sequents from $M$  are implied in $\mathit{I \Sigma_{n+1}}$ by a single formula $\theta$.

\begin{lemma}\label{theta_gamma}
For any node $a \in M$, it holds that
\begin{gather*}
\mathit{I \Sigma}_{n+1} \vdash \theta \rightarrow (\varphi_a \to \psi_a). 
\end{gather*}
\end{lemma}
\begin{proof}[Proof of Lemma]
Observe that this is equivalent to $\mathit{I \Sigma}_{n+1} \vdash \theta \rightarrow \bigvee \Gamma_a$. We prove the lemma by induction on $\mathit{rk}(a)$.

Case A. Should $a$ be an element of $C$, the claim holds trivially, as $I \Sigma_{n+1} \vdash \theta \rightarrow \psi_a$. If $a$ is a leaf of $\pi$ and $d(a) = b$, then the claim follows from the induction hypothesis since $\Gamma_a=\Gamma_b$.

Assume that the sequent at the node $a$ results from an application of an inference rule to the sequent at the node $a^\prime \in M$ and possibly some other sequent, and $\mathit{rk}(a^\prime) < \mathit{rk}(a)$. Note that this rule is not $\mathsf{(case)}$ because $a \nin C$. Without loss of generality, we can assume that $\Gamma_{a^\prime}$ is the left premiss of the rule in case of any inference rule with two premisses.

Case B. Suppose the sequent $\Gamma_a$ is derived by an application of the rule ($\mathsf{\wedge}$). The proof fragment at node $a$ thus has the form 
\[
\AXC{$\kappa^\prime$}
\noLine
\UIC{$\vdots$}
\noLine
\UIC{$\Delta, \alpha \upharpoonright V$}
\AXC{$\kappa^{\prime\prime}$}
\noLine
\UIC{$\vdots$}
\noLine
\UIC{$\Delta, \beta \upharpoonright V^{\prime\prime} $}
\LeftLabel{$\mathsf{(\wedge)}$}
\RightLabel{ .}
\BIC{$\Delta, \alpha \wedge \beta \upharpoonright V$}
\DisplayProof
\] 
Observe that $\mathit{I \Sigma}_{n+1} \vdash \theta \rightarrow \bigvee \Gamma_{a^\prime}$ holds by the induction hypothesis, hence $\mathit{I \Sigma}_{n+1} \vdash \theta \rightarrow \bigvee \Delta \vee \alpha$. If the node corresponding to the right premiss of the rule is an element of $M$, then by analogous reasoning, $\mathit{I \Sigma}_{n+1} \vdash \theta \rightarrow \bigvee \Delta \vee \beta$ holds. Otherwise, no back-links exist from the nodes of the tree $\kappa^{\prime\prime}$ to those within $M$. The proof $\kappa''$ is a cyclic proof with less nodes than $\pi$, consequently by the \emph{main induction hypothesis} $\mathit{I \Sigma}_{n+1} \vdash \bigvee \Delta \vee \beta$, therefore  $\mathit{I \Sigma}_{n+1} \vdash \theta \rightarrow \bigvee \Delta \vee \beta$. As a result, $\mathit{I \Sigma}_{n+1} \vdash \theta \rightarrow \Delta \vee (\alpha \wedge \beta)$ follows.

Case C. The case of rule $\mathsf{(cut)}$ is similar to the case of $\mathsf{(\wedge)}$.

Case D. Suppose $\Gamma_a$ is the conclusion of an application of rule $\mathsf{(\forall)}$. The corresponding fragment of the proof at node $a$ has the form 
\[
\AXC{$\kappa^\prime$}
\noLine
\UIC{$\vdots$}
\noLine
\UIC{$\Delta, \alpha[y \mapsto z] \upharpoonright V$}
\LeftLabel{$\mathsf{(\forall)}$}
\RightLabel{($z \nin \mathit{FV}(\Delta, \forall y\; \alpha))$.}
\UIC{$\Delta, \forall y\; \alpha \upharpoonright V$}
\DisplayProof
\] 
The induction hypothesis $\mathit{I \Sigma}_{n+1} \vdash \theta \rightarrow \bigvee \Gamma_{a^\prime}$ means $$\mathit{I \Sigma}_{n+1} \vdash \theta \rightarrow \bigvee \Delta \vee \alpha[y \mapsto z].$$ Observe that $z$ is an element of $B$, thereby $z \nin \mathit{FV}(\theta)$. Furthermore, $z \nin \mathit{FV}(\Delta, \forall y\; \alpha)$ by the definition of $\mathsf{(\forall)}$. Hence, by predicate logic we obtain $\mathit{I \Sigma}_{n+1} \vdash \theta \rightarrow \bigvee \Delta \vee \forall y\; \alpha$.

\textsc{Case E.} For any other rule, it is evident that $\mathit{I \Sigma}_{n+1} \vdash \bigvee \Gamma_{a^\prime} \rightarrow \bigvee \Gamma_a$. By the induction hypothesis $\mathit{I \Sigma}_{n+1} \vdash \theta \rightarrow \bigvee \Gamma_{a^\prime}$, hence $\mathit{I \Sigma}_{n+1} \vdash \theta \rightarrow \bigvee \Gamma_a$ and we are done.
\end{proof}

Let $y_1, \dotsc, y_m$ be the list of all active variables of applications of the rule $\mathsf{(case)}$ whose conclusions belong to $M$. We set 
\[\zeta(z) \coloneq \forall y_1\leq z \dotsc \forall y_m\leq z \; (y_1 + \dotsb + y_m = z \rightarrow \theta),\]
where $z \nin \mathit{FV}(\theta)$, $z\notin B$  and $z \nin \mathit{FV}(\varphi_{r(\pi)})$. Note that $\zeta(z)$ is a $\Pi_{n+1}$-formula.

\begin{lemma}\label{zeta_ind}
It holds that
\begin{gather*}
\mathit{I \Sigma}_{n+1} \vdash \varphi_{r(\pi)} \rightarrow \zeta(0), \qquad \mathit{I \Sigma}_{n+1} \vdash \varphi_{r(\pi)} \rightarrow \forall z\; (\zeta(z) \rightarrow \zeta(s(z))).
\end{gather*}
\end{lemma}
\begin{proof}[Proof of Lemma]
For each $c$ from $C$, the corresponding fragment of the proof has the form
\[
    \AXC{$\kappa^\prime$}
    \noLine
    \UIC{$\vdots$}
    \noLine
    \UIC{$\Phi_c, \Psi_c[y_j \mapsto 0] \upharpoonright \emptyset$}
    \AXC{$\kappa^{\prime\prime}$}
    \noLine
    \UIC{$\vdots$}
    \noLine
    \UIC{$\Phi_c, \Psi_c[y_j \mapsto s(y_j)] \upharpoonright V$}
    \LeftLabel{$\mathsf{(case)}$}
    \RightLabel{($y_j \in FV(\Phi_c, \Psi_c))$ ,}
    \BIC{$\Phi_c, \Psi_c \upharpoonright V$}
    \DisplayProof
\]
where $y_j$ is the active variable of the inference. Note, that $y_j \nin \mathit{FV}(\Phi_c)$ as $V$ is non-empty and in this case active variable of the rule $\mathsf{(case)}$ occurs freely only in $\Pi_{n+1}$-formulas of conclusion. Since there are no left premisses of the rule $\mathsf{(case)}$ in between any two nodes connected by a back-link, we have an annotated cyclic  proof $\pi^\prime =(\kappa^\prime, d^\prime)$ of $\Phi_c,\Psi_c[y_j \mapsto 0] \upharpoonright \emptyset$ in $\mathsf{S}_n$. From the main induction hypothesis for $\pi^\prime$, we obtain $\mathit{I \Sigma}_{n+1} \vdash \varphi_c \rightarrow \psi_c[y_j \mapsto 0]$. Applying Lemma \ref{side_equiv}, we get $\mathit{I \Sigma}_{n+1} \vdash \varphi_c \leftrightarrow \varphi_{r(\pi)}$ and $\mathit{I \Sigma}_{n+1} \vdash \varphi_{r(\pi)} \rightarrow \psi_c[y_j \mapsto 0]$. As $y_j \nin FV(\Phi_c)$, we conclude that $\varphi_{r(\pi)}$ does not depend on $y_j$:
$$\mathit{I\Sigma}_{n+1}\vdash \phi_{r(\pi)}[y_j\mapsto 0]\eqv \phi_{r(\pi)}.$$
The same holds for every $j \in \{1, \dots, m\}$, therefore we obtain $$\mathit{I \Sigma}_{n+1} \vdash \varphi_{r(\pi)} \rightarrow \psi_c[y_1 \mapsto 0, \dotsc, y_m \mapsto 0].$$
Consequently, 
\begin{equation}\label{ber} \mathit{I \Sigma}_{n+1} \vdash \varphi_{r(\pi)} \rightarrow \bigwedge_{c \in C} \psi_c[y_1 \mapsto 0, \dotsc, y_m \mapsto 0].
\end{equation}

Now we argue in a similar manner that $\varphi_{r(\pi)}$ does not depend on the variables from $B$. Let $x_i\in B$ be the active variable in an application of $\mathsf{(\forall)}$ whose conclusion $\Gamma_a = \Phi_a,\Psi_a$ is in $M$. Since $V$ is non-empty, its principal formula $\al{x}\psi\in \Pi_{n+1}$ does not occur in $\Phi_a$, so  $x_i\notin\mathit{FV}(\Phi_a)$. By Lemma \ref{side_equiv}, we obtain that $\varphi_{r(\pi)}$ is equivalent to the formula $\varphi_a$ in which $x_i$ does not occur, and the claim follows. 

Now, applying Bernays' rule to \refeq{ber} we obtain $$\mathit{I \Sigma}_{n+1} \vdash \varphi_{r(\pi)} \rightarrow \theta[y_1 \mapsto 0, \dotsc, y_m \mapsto 0],$$ and therefore $\mathit{I \Sigma}_{n+1} \vdash \varphi_{r(\pi)} \rightarrow \zeta(0).$

In order to prove $\mathit{I \Sigma}_{n+1} \vdash \varphi_{r(\pi)} \rightarrow \forall z\; (\zeta(z) \rightarrow \zeta(s(z)))$, it is sufficient to show 
\[\mathit{I \Sigma}_{n+1} \vdash \varphi_{r(\pi)} \rightarrow (\zeta(z) \rightarrow (y_1 + \dotsb + y_n = s(z) \rightarrow \psi_c)),\]
for each $c$ from $C$, because $\varphi_{r(\pi)}$ does not depend on variables from $B \cup \{z, y_1, \dotsc, y_m\}$. Let $y_j$ be the active variable in an application of the rule $\mathsf{(case)}$ at the node $c$. Arguing in $\mathit{I \Sigma}_{n+1}$, we consider two cases: $y_j = 0$ or $y_j = s(y^\prime_j)$. If $y_j = 0$ then $\varphi_{r(\pi)} \rightarrow \psi_c[y_j \mapsto 0]$ is provable in $\mathit{I \Sigma_{n+1}}$ as we have seen.

Reasoning in $\mathit{I\Sigma}_{n+1}$ assume $y_j = s(y^\prime_j)$, $\varphi_{r(\pi)}$, $\zeta(z)$ and 
\begin{gather*}
y_1 + \dotsb + y_{j-1} + s(y^\prime_j) + y_{j+1} + \dotsb + y_n = s(z).
\end{gather*}
Then
\begin{gather}\label{assertion1}
y_1 + \dotsb + y_{j-1} + y^\prime_j + y_{j+1} + \dotsb + y_n = z.
\end{gather}
From (\ref{assertion1}) and $\zeta(z)$, we obtain $\theta[y_j \mapsto y^\prime_j]$. 

Recall that the node $c$ is the conclusion of an application the rule $\mathsf{(case)}$. Let $b$ be the node corresponding to the right premiss of this application. From Lemma \ref{theta_gamma} and $\theta[y_j \mapsto y^\prime_j]$, we have $\varphi_b \rightarrow \psi_b[y_j \mapsto y^\prime_j]$, as $y_j \nin \mathit{FV}(\varphi_b)$. Applying Lemma \ref{side_equiv} and $\varphi_{r(\pi)}$, we obtain $\psi_b[y_j \mapsto y^\prime_j]$. As $b$ is the right premiss of the rule $\mathsf{(case)}$ for the node $c$, we have $\psi_b[y_j \mapsto y^\prime_j] = \psi_c[y_j \mapsto s(y^\prime_j)]$. We recall that $y_j = s(y^\prime_j)$ and obtain the required formula $\psi_c$. The second case is checked, and the lemma is proven.
\end{proof}

Recall that the induction principle for $\Pi_{n+1}$-formulas is provable in $\mathit{I \Sigma}_{n+1}$, see \cite{HP}. Therefore, we derive $\mathit{I \Sigma}_{n+1} \vdash \varphi_{r(\pi)} \rightarrow  \forall z\; \zeta(z)$ from Lemma \ref{zeta_ind}. 

The formula $\al{z}\zeta(z)$ clearly implies $\al{y_1}\dots \al{y_m}\theta(y_1,\dots,y_m)$ in $\mathit{I\Sigma}_0$, so we obtain 
$$I\Sigma_{n+1}\vdash \varphi_{r(\pi)}\to \theta.$$
By Lemma \ref{theta_gamma}, we also have $\mathit{I \Sigma}_{n+1} \vdash \varphi_{r(\pi)} \rightarrow (\theta \rightarrow \psi_{r(\pi)})$.
Therefore, $\mathit{I \Sigma}_{n+1} \vdash \varphi_{r(\pi)} \rightarrow \psi_{r(\pi)}$.
Since $\mathit{I \Sigma}_{n+1} \vdash \bigvee \Gamma \leftrightarrow (\varphi_{r(\pi)} \rightarrow \psi_{r(\pi)})$, we conclude that $\mathit{I \Sigma}_{n+1} \vdash \bigvee \Gamma$.
\end{proof}

Thus, we have established that the system $\mathsf{S}_n$ is deductively equivalent to $\mathit{I\Sigma}_{n+1}$. As a corollary we also obtain that unrestricted regular $\infty$-proofs correspond to proofs in Peano arithmetic. 
\bcor A formula $\phi$ is provable in $\PA$ iff $\phi$ has a regular $\infty$-proof. 
\ecor 
\begin{proof} Any regular $\infty$-proof is $\Pi_{n+1}$-restricted, for some $n$. 
\end{proof}

\section{Proof systems for induction rules} \label{rules}
Fragments of Peano arithmetic defined by restricted forms of induction rules are well-studied in proof theory, see~\cite{Bek97a,Jer2020} for detailed surveys. Here we are interested in the axiomatization of such theories using non-well-founded and cyclic proofs. 

First, we introduce a natural modification of $\mathsf{S}_n$ that yields a system deductively equivalent to $\mathit{I\Pi}_{n+1}^R$, that is, to the closure of $\Q$ under the $\Pi_{n+1}$-induction rule. More generally, given an extension $T$ of $\Q$ by some set of additional axioms, we would like to characterize the closure of $T$ under the $\Pi_{n+1}$-induction rule. To this end, we introduce proofs from assumptions.   

Given a set $T$ of sentences, an \emph{$\infty$-proof from assumptions $T$} is an $\infty$-proof in which the rules of the calculus $\SQ$ are extended by the initial sequents of the form $\{\varphi\}$ for all $\phi\in T$. 
As before, a \emph{regular $\infty$-proof from assumptions} is an $\infty$-proof from assumptions such that the proof tree has only finitely many non-isomorphic subtrees. Note also that assumptions never occur in infinite branches of the proof tree. Therefore, the notion of $\Pi_{n}$-restricted $\infty$-proof is essentially unaffected by the presence of assumptions.

We say that a $\Pi_{n+1}$-restricted $\infty$-proof (from assumptions $T$) is an \emph{$\infty$-proof of $\SR_n$ (from $T$)} whenever every infinite branch in it has a tail satisfying, instead of condition (g), the following stronger condition: if the tail passes through the right premiss $\Gamma [z\mapsto s(z)]$ of the rule $\mathsf{(case)}$, then $\Gamma$ consists entirely of $\Pi_{n+1}$-formulas. Analogously to the definition of $\mathsf{S}_n$, a sequent $\Gamma$ is \emph{provable in $\SR_n$ (from assumptions $T$)} if there is a regular $\infty$-proof of $\SR_n$ (from $T$) with the root marked by $\Gamma $. For this calculus, the notions of annotated $\infty$-proof and cyclic annotated proof of $\mathsf{S}_n$ are modified appropriately.



\bl \label{sr} The set of formulas provable in $\SR_n$ (from assumptions $T$) is closed under the induction rule $(\mathrm{IR})$ for $\Pi_{n+1}$-formulas.
\el 

\bp Assume $\phi(x)$ is $\Pi_{n+1}$, $x\in \mathit{FV}(\varphi)$ and both formulas $\phi(0)$ and $\al{x}(\phi(x)\to \phi(s(x)))$ are provable in $\SR_n$. Then the sequents $\{\phi(0)\}$ and $\{\overline{\varphi(x)} , \varphi (s(x))\}$ have regular $\infty$-proofs of $\SR_n$ denoted $\sigma_0$ and $\sigma_1$, respectively. We combine them into the following regular $\infty$-proof $\pi$:    
\begin{equation*}\label{exrule}
\AXC{$\sigma_0$}
\noLine
\UIC{$\vdots$}
\noLine
\UIC{$\varphi(0)$}
\AXC{$\pi$}
\noLine
\UIC{$\vdots$}
\noLine
\UIC{$\varphi(x)$}
\LeftLabel{$\mathsf{(weak)}$}
\UIC{$\varphi(x), \varphi (s(x))$}
\AXC{$\sigma_1$}
\noLine
\UIC{$\vdots$}
\noLine
\UIC{$\overline{\varphi(x)} , \varphi (s(x))$}
\LeftLabel{$\mathsf{(cut)}$}
\BIC{$\varphi (s(x))$}
\LeftLabel{$\mathsf{(case)}$}
\BIC{$\varphi (x)$}
\DisplayProof 
\end{equation*}

\medskip
Here, the subtree $\pi$ is isomorphic to the whole $\infty$-proof. It is easy to check that the displayed proof is a regular $\infty$-proof of $\SR_n$. 
\ep 

The converse to Lemma \ref{sr} also holds. Let $T+\pir{n}$ denote the closure of $T$ under the induction rule for $\Pi_{n}$ formulas.

\begin{theorem} \label{main2}
If $\Gamma$ is provable in $\SR_n$ from assumptions $T$, then $\Gamma$ is a theorem of $\Q+T+\pir{n+1}$. 
\end{theorem}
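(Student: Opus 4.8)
The plan is to adapt the proof of Theorem~\ref{main1} almost verbatim, replacing $\mathit{I\Sigma}_{n+1}$ everywhere by $\Q+T+\pir{n+1}$, and to exploit the single point where the stronger progress condition of $\SR_n$ changes the argument. First I would invoke the $\SR_n$-analogues of Lemma~\ref{AnnLem} and Lemma~\ref{CyclLem}, so that it suffices to prove $\Q+T+\pir{n+1}\vdash\bigvee\Gamma$ whenever there is a cyclic annotated proof $\pi=(\kappa,d)$ of $\Gamma\upharpoonright V$ in $\SR_n$ from $T$, and I would argue by induction on the number of nodes of $\pi$ (the \emph{main induction}). Case~1, where no leaf is back-linked to the root, is literally the argument of Theorem~\ref{main1}: the only new base case is an assumption leaf $\{\varphi\}$ with $\varphi\in T$, which is immediate since $\varphi$ is an axiom of $\Q+T+\pir{n+1}$, and every rule step is unchanged, using the main induction hypothesis.

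The substantive case is Case~2, a back-link into the root. Here I would reuse the sets $M$ and $C$ and the formulas $\varphi_a,\psi_a,\theta,\zeta(z)$ exactly as in Theorem~\ref{main1}, together with Lemma~\ref{side_equiv} and Lemma~\ref{theta_gamma}; the proofs of those two lemmas use only predicate logic and the main induction hypothesis, and so survive the replacement of $\mathit{I\Sigma}_{n+1}$ by $\Q+T+\pir{n+1}$. The genuinely new ingredient concerns $\SR_n$: on the path from the root to the back-linked leaf the annotation $V$ is non-empty and constant and crosses the right premiss of some application of $\mathsf{(case)}$; since that right premiss again carries the annotation $V$, its annotated version must have fired with $V_3=V$, which under the strengthened condition~(g) of $\SR_n$ is possible only when the \emph{entire} conclusion $\Gamma_c$ consists of $\Pi_{n+1}$-formulas. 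Thus $\Phi_c=\emptyset$, i.e.\ $\varphi_c=\overline{\bigvee\emptyset}=\top$, for this node $c\in C$ (and in fact for every $c\in C$, as the right premiss of each such $\mathsf{(case)}$ lies in $M$ and so is annotated by the non-empty $V$). By Lemma~\ref{side_equiv} it follows that $\vdash\varphi_{r(\pi)}\leftrightarrow\top$; that is, $\varphi_{r(\pi)}$ is provable already in predicate logic.

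With $\varphi_{r(\pi)}\equiv\top$ in hand, the $\SR_n$-analogue of Lemma~\ref{zeta_ind} — whose proof again uses only predicate logic, Lemma~\ref{side_equiv}, Lemma~\ref{theta_gamma}, Bernays' rule and the main induction hypothesis — no longer produces merely conditional statements but the outright theorems $\Q+T+\pir{n+1}\vdash\zeta(0)$ and $\Q+T+\pir{n+1}\vdash\forall z\,(\zeta(z)\to\zeta(s(z)))$. This is precisely the spot where Theorem~\ref{main1} appealed to the $\Pi_{n+1}$-induction \emph{schema} in order to discharge the antecedent $\varphi_{r(\pi)}$; here, since $\zeta(z)$ is a $\Pi_{n+1}$-formula and both premises are established unconditionally, I can instead apply the $\Pi_{n+1}$-induction \emph{rule} $(\mathrm{IR})$ directly to obtain $\Q+T+\pir{n+1}\vdash\forall z\,\zeta(z)$. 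Because $\forall z\,\zeta(z)$ yields $\theta$ over $\mathit{I}\Delta_0\subseteq\Q+\pir{n+1}$ (instantiate $z$ by the relevant sum of the case-variables), Lemma~\ref{theta_gamma} gives $\vdash\theta\to\psi_{r(\pi)}$ and hence $\vdash\psi_{r(\pi)}$; finally $\varphi_{r(\pi)}\equiv\top$ makes $\bigvee\Gamma$ equivalent to $\psi_{r(\pi)}$, so $\Q+T+\pir{n+1}\vdash\bigvee\Gamma$.

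I expect the main obstacle to be the claim of the second paragraph, that the strengthened condition~(g) collapses the side context, i.e.\ forces $\varphi_{r(\pi)}\equiv\top$. One must check carefully that each $\mathsf{(case)}$ application contributing to $C$ really propagates the non-empty annotation $V$ into its right premiss, so that the $\SR_n$-restriction actually applies and delivers $\Phi_c=\emptyset$, rather than only the weaker $\mathsf{S}_n$-conclusion that the active variable avoids the non-$\Pi_{n+1}$ side formulas. Once this collapse is secured, the passage from the induction schema to the induction rule is exactly the content of the theorem, and the remaining bookkeeping with the assumptions $T$ is routine.
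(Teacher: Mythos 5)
Your proposal is correct and follows essentially the same route as the paper: it replays the proof of Theorem~\ref{main1} with $\Q+T+\pir{n+1}$ in place of $\mathit{I\Sigma}_{n+1}$, observes that Lemmas~\ref{side_equiv}--\ref{zeta_ind} survive unchanged, and at the final step uses the strengthened condition on $\mathsf{(case)}$ in $\SR_n$ to conclude $\Phi_c=\emptyset$ for the $\mathsf{(case)}$ conclusions in $M$, so that $\varphi_{r(\pi)}$ is logically provable by Lemma~\ref{side_equiv} and the $\Pi_{n+1}$-induction \emph{rule} (rather than the schema) suffices to derive $\forall z\,\zeta(z)$. The worry you flag --- that each relevant $\mathsf{(case)}$ application really propagates the non-empty annotation $V$ into its right premiss --- is settled exactly as you suggest, since that right premiss lies on a directed cycle through the root and hence in $M$, which is the point the paper's own proof relies on.
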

\begin{proof}
The proof is almost the same as that of Theorem~\ref{main1}. Read everywhere in that proof $\Q+T+\pir{n+1}$ instead of $\mathit{I\Sigma}_{n+1}$. Observe that Lemma~\ref{side_equiv} stays the same. The proof of Lemma \ref{theta_gamma} does not change except for the reference to the now stronger main induction hypothesis. Lemma~\ref{zeta_ind} also goes without change. 

Now we arrive at the final part of the proof. There, the crucial point is to apply the $\Pi_{n+1}$-induction rule with a side formula $\phi_{r(\pi)}$. By Lemma~\ref{side_equiv} we observe that $\phi_{r(\pi)}$ is logically equivalent to $\phi_a$, where $a$ is the conclusion of some $\mathsf{(case)}$ rule application in $M$. However, $\phi_a=\overline{\bigvee\Phi_a}$ where $\Phi_a=\emptyset$ by our definition of $\SR_n$. Hence, $\phi_{r(\pi)}$ must be logically provable.  
It follows that $\Q+T+\pir{n}$ proves $\al{z}\zeta(z)$ and we finish the argument as before. 
\end{proof} 

We conclude that the system $\SR_n$ exactly axiomatizes the fragment $\mathit{I\Pi}^R_{n+1}$ of Peano arithmetic axiomatized by the $\Pi_{n+1}$-induction rule over $\Q$. By a well-known result of Parsons~\cite{Par68}, for $n>0$, theories $\mathit{I\Pi}^R_{n+1}$ and $\mathit{I\Sigma}_n^R$ are deductively equivalent. However, over an arbitrary extension $T$ of $\Q$, this need not be so: For example, $\mathit{I\Sigma}_n$ is closed under the $\Sigma_n$-induction rule, but not under the $\Pi_{n+1}$-induction rule, see \cite{Bek97a} for exact characterizations. Also, $n=0$ is an exception: We do not know if  $\mathit{I\Pi}^R_1$ is strictly stronger than $\mathit{I\Sigma}^R_0\equiv \mathit{I\Delta}_0$, although this holds over the elementary arithmetic $\EA$.  

In order to formulate a version of $\mathsf{S}_n$ closely related to the $\Sigma_n$-induction rule, we need to modify the definition of $\Pi_n$-restricted $\infty$-proof as follows. 

\begin{definition}
An $\infty$-proof (possibly from assumptions) is
\emph{$\Sigma_n$-restricted } if every infinite branch in it has a tail satisfying conditions (a)--(c) from the definition of $\infty$-proof and the following ones: 
(d') if the tail passes through the left (right) premiss of the rule ($\wedge$), then the formula $\varphi$ ($\psi$) belongs to $\Sigma_{n}$, (e') 
the tail does not intersect applications of the rule ($\forall$), (f') if the tail passes through the left (right) premiss of the rule $\mathsf{(cut)}$, then the cut formula $\varphi$ ($\overline{\varphi}$) is $\Sigma_{n}$,
(g') if the tail passes through the right premiss $\Gamma [z\mapsto s(z)]$ of the rule $\mathsf{(case)}$, then $\Gamma$ consists entirely of $\Sigma_n$-formulas. 
\end{definition}  
Since $\Sigma_n\subseteq \Pi_{n+1}$, we see that each $\Sigma_n$-restricted $\infty$-proof is also $\Pi_{n+1}$-restricted. However, due to condition (e'), a $\Pi_{n}$-restricted $\infty$-proof need not necessarily be $\Sigma_{n+1}$-restricted. 
Now we say that a sequent $\Gamma$ is \emph{provable in $\ST_n$ (from assumptions $T$)} if there is a $\Sigma_n$-restricted regular $\infty$-proof (from $T$) with the root marked by $\Gamma$. We also naturally modify the notions of annotated $\infty$-proof and cyclic annotated proof of $\mathsf{S}_n$ in order to obtain the corresponding notions for $\ST_n$.

We notice that analogs of Lemma \ref{sr} and Theorem \ref{main2} hold for $\ST_n$ with similar proofs.

\bl \label{sr1} The set of formulas provable in $\ST_n$ (from assumptions $T$) is closed under the induction rule $(\mathrm{IR})$ for $\Sigma_{n}$-formulas.
\el 

\begin{theorem} \label{main3}
If $\Gamma$ is provable in $\ST_n$ from assumptions $T$, then $\Gamma$ is a theorem of $\Q+T+\sir{n}$.
\end{theorem}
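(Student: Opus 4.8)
The plan is to run the proof of Theorem~\ref{main2} essentially verbatim, reading $\Q+T+\sir{n}$ everywhere in place of $\mathit{I\Sigma}_{n+1}$ and decomposing each sequent $\Gamma_a$ (for $a\in M$) as $\Phi_a,\Psi_a$, where now $\Psi_a$ collects the $\Sigma_n$-formulas of the sequent and $\Phi_a$ the rest; accordingly $\varphi_a:=\overline{\bigvee\Phi_a}$ and $\psi_a:=\bigvee\Psi_a$. The three auxiliary lemmas should carry over: Lemma~\ref{side_equiv} ($\vdash\varphi_a\leftrightarrow\varphi_b$) by the same case analysis along directed paths, Lemma~\ref{theta_gamma} word for word but now invoking the weaker main induction hypothesis (provability in $\Q+T+\sir{n}$), and Lemma~\ref{zeta_ind} by the same inductions on $\rk(a)$ and on the number of nodes.

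Two features of the $\Sigma_n$-restriction make the argument cleaner than in Theorem~\ref{main2}. First, condition (e') forbids the critical tail from meeting any $(\forall)$-inference, so the annotated $(\forall)$-rule for $\ST_n$ necessarily empties the annotation; hence no application of $(\forall)$ can carry the common non-empty annotation $V$ of $M$ on both premiss and conclusion. Consequently the $(\forall)$-cases of Lemma~\ref{side_equiv} and Lemma~\ref{theta_gamma} cannot arise, the set $B$ of $(\forall)$-active variables attached to $M$ is empty, and $\theta$ reduces to the plain conjunction $\bigwedge_{c\in C}\psi_c$ with each $\psi_c\in\Sigma_n$, so $\theta\in\Sigma_n$. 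Second, condition (g') forces $\Gamma_c$ to consist entirely of $\Sigma_n$-formulas at every case-conclusion $c\in C$, whence $\Phi_c=\emptyset$ and $\varphi_c=\top$; by Lemma~\ref{side_equiv} the side formula $\varphi_{r(\pi)}$ is therefore logically provable, exactly as in Theorem~\ref{main2}. This lets us discharge $\varphi_{r(\pi)}$ and use a genuine \emph{induction rule} (no side formula) rather than an induction axiom, proving $\zeta(0)$ and $\forall z\,(\zeta(z)\to\zeta(s(z)))$ outright, obtaining $\forall z\,\zeta(z)$, and deducing $\theta$, then $\psi_{r(\pi)}$, then $\bigvee\Gamma$.

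The hard part will be the complexity of the induction formula. To apply $\sir{n}$ we need $\zeta(z)$ to lie in $\Sigma_n$, whereas the natural candidate $\zeta(z)=\forall y_1\le z\dotsc\forall y_m\le z\,(y_1+\dotsb+y_m=z\to\theta)$ prepends \emph{bounded universal} quantifiers to a $\Sigma_n$-formula. When there is a single progress variable ($m=1$) the constraint $y_1=z$ collapses the bound and $\zeta(z)$ is logically equivalent to $\theta(z)\in\Sigma_n$, so the rule applies immediately. For $m\ge 2$, however, $\zeta(z)$ is prima facie only $\Pi_{n+1}$; and since the rule $\sir{n}$ is comparatively weak (for instance $\Q+\sir{1}$ is deductively equivalent to $\PRA$), one cannot take for granted the $\Sigma_n$-collection that would be required to rewrite $\zeta$ as a genuine $\Sigma_n$-formula. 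This is exactly the point where $\Pi_{n+1}$ behaved painlessly in Theorem~\ref{main2}, being closed under bounded (indeed unbounded) universal quantification, while $\Sigma_n$ is not.

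I therefore expect the real work of Theorem~\ref{main3} to be the verification that the induction can be confined to $\Sigma_n$. The most transparent route is to ensure that all the progress recorded by $M$ is carried by a single variable, so that the simultaneous induction on $y_1+\dotsb+y_m$ degenerates to an ordinary induction and $\zeta$ reduces to $\theta$; the cross-dependency between the conjuncts $\psi_c$ (the step for a $\psi_c$ with active variable $y_j$ consumes $\theta$ with $y_j$ decremented) is what forbids simply splitting the task into $m$ independent single-variable inductions, and it is what must be organized away. Once $\zeta$ is seen to be $\Sigma_n$, the remainder of the argument is routine, and the analogue of Lemma~\ref{sr1} then shows that $\ST_n$ (from assumptions $T$) axiomatizes precisely $\Q+T+\sir{n}$.
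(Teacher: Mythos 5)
You track the paper's argument faithfully right up to the crux, and all of your structural observations match the actual proof: the $\Psi_a$ are now the $\Sigma_n$-parts, condition (e') forces $B=\emptyset$ so that $\theta=\bigwedge_{c\in C}\psi_c\in\Sigma_n$, condition (g') gives $\Phi_c=\emptyset$ at case-conclusions so that $\varphi_{r(\pi)}$ is logically provable and a genuine induction \emph{rule} without side formulas suffices; and you correctly isolate the one real difficulty, namely that $\zeta$ is a block of bounded universal quantifiers in front of a $\Sigma_n$-formula. But at exactly that point your proposal stops being a proof. Your suggested resolution --- reorganize the cyclic proof so that ``all the progress recorded by $M$ is carried by a single variable'' --- is a hope, not an argument: you give no procedure for such a normalization, and the cross-dependency among the conjuncts $\psi_c$ that you yourself point out is precisely the obstruction. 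The paper makes no attempt at any such normalization; your $m=1$ observation is correct but does not generalize, so for $m\geq 2$ and $n\geq 1$ your proof is incomplete.

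What the paper does instead is prove a reduction lemma (Lemma~\ref{b_induction}): for a formula $\psi(z)=\al{x\leq z}\ex{y}\phi(x,y,z)$ with $\phi\in\Pi_n$, the induction rule for $\psi$ is reducible to $\sir{n+1}$; applied with the index shifted down by one (write $\theta=\ex{y}\phi$ with $\phi\in\Pi_{n-1}$), this handles $\zeta$ using only $\sir{n}$. The trick is to strengthen the induction predicate by an explicit witness bound, $\phi'(z,v)\coloneq\al{x\leq z}\ex{y\leq v}\,\phi(x,y,z)$: inside $\mathit{I\Sigma}_{n-1}$ --- which is contained in $\Q+T+\sir{n}$, as the paper notes via $\mathit{I\Sigma}_n\subseteq\mathit{I\Sigma}^R_{n+1}$ --- the bounded-quantifier prefix of $\phi'$ can be absorbed by $\Sigma_{n-1}$-collection, making $\phi'$ equivalent to a $\Pi_{n-1}$-formula $\phi''$; then $\ex{v}\phi''$ is $\Sigma_n$, $\sir{n}$ applies, and $\al{z}\ex{v}\phi'(z,v)$ yields $\al{z}\zeta(z)$. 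So your diagnosis that ``one cannot take for granted the $\Sigma_n$-collection'' is where you went astray: full $B\Sigma_n$ is indeed unavailable, but the bounding trick lowers the collection needed by one level, to what $\mathit{I\Sigma}_{n-1}$ already proves. Note also that your claim that $\zeta$ is ``prima facie only $\Pi_{n+1}$'' fails at $n=0$: bounded quantification keeps $\Delta_0$ formulas in $\Delta_0=\Sigma_0$, so there $\sir{0}$ applies directly, which is exactly how the paper disposes of the base case before invoking the lemma for $n>0$.
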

\begin{proof}
We only remark that in this case the formulas $\Psi_a$ are $\Sigma_n$. Due to condition (e') the set of variables $B$ is empty, hence the formula $\theta$ is $\bigwedge_{c\in C}\psi_c$. Hence, $\theta\in\Sigma_n$ and $\zeta$ is obtained from a $\Sigma_n$-formula by bounded universal quantifiers. As in Theorem \ref{main2}, the final part of the proof boils down to an application of the induction rule (without side formulas) for $\zeta$. For $n=0$ such an inference is clearly admissible by $\sir{0}$. 

For $n>0$, the following lemma shows that such an application is also reducible to some  applications of $\sir{n}$. We state it for one bounded quantifier, but the proof clearly extends to any number of them.   

\begin{lemma} \label{b_induction}
Let $T$ be an extension of $\Q$ and $\psi(z)$ be a formula of the form $\al{x\leq z}\ex{y}\phi(x,y,z)$ with $\phi\in\Pi_{n}$. Assume $T\vdash \psi(0)$ and $T\vdash \al{z}(\psi(z)\to \psi(s(z)))$. Then $T+\sir{n+1}\vdash \al{z}\psi(z).$
\end{lemma}

\begin{proof} Without loss of generality we may assume $T$ to be an extension of $\mathit{I\Delta}_0$, since $\mathit{I\Delta}_0$ is contained in (and is equivalent to) $\mathit{I\Sigma}_0^R$. 
Now consider the formula $$\phi'(z,v):=\al{x\leq z}\ex{y\leq v}\phi(x,y,z).$$ 
Notice that $\phi'(z,v)$ is equivalent to a $\Pi_n$ formula, say $\phi''(z,v)$, in $\mathit{I\Sigma}_n$, since $\mathit{I\Sigma}_n$ proves the $\Sigma_n$-collection schema for $n>0$. $\mathit{I\Sigma}^R_{n+1}$ contains $\mathit{I\Sigma}_n$, hence the equivalence holds in $T+\sir{n+1}$ as well.

Further, it is easy to see from our assumptions that $T$ proves $\ex{v}\phi'(0,v)$ and $\al{z}(\ex{v}\phi'(z,v)\to \ex{v}\phi'(s(z),v))$. Hence, $T+\sir{n+1}$ proves $\ex{v}\phi''(0,v)$ and $\al{z}(\ex{v}\phi''(z,v)\to \ex{v}\phi''(s(z),v))$. By $\sir{n+1}$, we conclude $\al{z}\ex{v}\phi''(z,v)$, from which $\al{z}\ex{v}\phi'(z,v)$ and $\al{z}\psi(z)$ follow. 
\end{proof}
Applying Lemma~\ref{b_induction} to $\zeta$ we obtain $\al{z}\zeta(z)$, hence $\theta$ and the claim of the theorem. 
\end{proof}

Notice that, by Theorem \ref{main3}, the system $\ST_0$ is deductively equivalent to the fragment of arithmetic $\mathit{I\Delta}_0$. 

\section{Conclusion} 
In this paper we have defined a number of cyclic systems axiomatizing the main fragments of Peano arithmetic with the axioms or rules of induction restricted to classes of the arithmetical hierarchy. This opens a number of directions for further research. 

Firstly, we would like to do some experiments with inductive proof search based on the cyclic systems of arithmetic and type theory akin to those introduced in this paper. Some preliminary work in this direction has already been done (see~\cite{Smir23}). 

Secondly, we would like to study the possibilities of using cyclic proofs for proof-theoretic analysis of arithmetic and other systems. In particular, there is a need to develop (partial) cut-elimination techniques in this context.  

\hbadness 10000
\emergencystretch=1.5em
\printbibliography

@string{APAL = "Annals of Pure and Applied Logic"}

@string{ZML = "Zeitschrift f.\ math.\ Logik und Grundlagen d.\ Math."}

@string{Archive = "Archive for Mathematical Logic"}

@string{Bek = "Beklemishev, L. D."}

@InProceedings{AfLeMe21,
author="Afshari, Bahareh
and Leigh, Graham E.
and Men{\'e}ndez Turata, Guillermo",
editor="Das, Anupam
and Negri, Sara",
title="Uniform interpolation from cyclic proofs: the case of modal mu-calculus",
booktitle="Automated Reasoning with Analytic Tableaux and Related Methods",
year="2021",
publisher="Springer International Publishing",
address="Cham",
pages="335--353",
abstract="We show how to construct uniform interpolants in the context of the modal mu-calculus. D'Agostino and Hollenberg (2000) were the first to prove that this logic has the uniform interpolation property, employing a combination of semantic and syntactic methods. This article outlines a purely proof-theoretic approach to the problem based on insights from the cyclic proof theory of mu-calculus. We argue the approach has the potential to lend itself to other temporal and fixed point logics.",
isbn="978-3-030-86059-2"
}

@InProceedings{BaDoSa16,
  author =	{Baelde, David and Doumane, Amina and Saurin, Alexis},
  title =	{{Infinitary proof theory: the multiplicative additive case}},
  booktitle =	{25th EACSL Annual Conference on Computer Science Logic (CSL 2016)},
  pages =	{42:1--42:17},
  series =	{Leibniz International Proceedings in Informatics (LIPIcs)},
  ISBN =	{978-3-95977-022-4},
  ISSN =	{1868-8969},
  year =	{2016},
  volume =	{62},
  editor =	{Talbot, Jean-Marc and Regnier, Laurent},
  publisher =	{Schloss Dagstuhl -- Leibniz-Zentrum f{\"u}r Informatik},
  address =	{Dagstuhl, Germany},
  URL =		{https://drops.dagstuhl.de/entities/document/10.4230/LIPIcs.CSL.2016.42},
  URN =		{urn:nbn:de:0030-drops-65825},
  doi =		{10.4230/LIPIcs.CSL.2016.42},
  annote =	{Keywords: Infinitary proofs, linear logic}
}

@article{Bek97a,
        AUTHOR = Bek,
        TITLE = "Induction rules, reflection principles, and
                provably recursive functions",
        YEAR = 1997,
        JOURNAL = APAL,
        VOLUME = 85,
        PAGES = "193--242"}

@article{Bek99b,
        AUTHOR = "Beklemishev, L.D.",
        TITLE = "Proof-theoretic analysis by iterated reflection",
        YEAR = 2003,
        JOURNAL = Archive,
        VOLUME = "42",
        PAGES = "515--552"}

@inproceedings{BeTa17,
  author    = {Stefano Berardi and
               Makoto Tatsuta},
  title     = {Equivalence of inductive definitions and cyclic proofs under arithmetic},
  booktitle = {32nd Annual {ACM/IEEE} Symposium on Logic in Computer Science, {LICS}
               2017, Reykjavik, Iceland, June 20-23, 2017},
  pages     = {1--12},
  publisher = {{IEEE} Computer Society},
  year      = {2017},
  url       = {https://doi.org/10.1109/LICS.2017.8005114},
  doi       = {10.1109/LICS.2017.8005114},
  timestamp = {Wed, 16 Oct 2019 14:14:54 +0200},
  biburl    = {https://dblp.org/rec/conf/lics/BerardiT17.bib},
  bibsource = {dblp computer science bibliography, https://dblp.org}
}

@article{BeTa19,
    title      = {Classical system of Martin-Lof's inductive definitions is not equivalent to cyclic proofs},
    author     = {Stefano Berardi and Makoto Tatsuta},
    url        = {https://lmcs.episciences.org/4173},
    doi        = {10.23638/LMCS-15(3:10)2019},
    journal    = {Logical Methods in Computer Science},
    issn       = {1860-5974},
    volume     = {Volume 15, Issue 3},
    eid        = 10,
    year       = {2019},
    month      = {8},
    keywords   = {Computer Science - Logic in Computer Science},
}

@InProceedings{Bro05,
author="Brotherston, James",
editor="Beckert, Bernhard",
title="Cyclic proofs for first-order logic with inductive definitions",
booktitle="Automated Reasoning with Analytic Tableaux and Related Methods",
year="2005",
publisher="Springer Berlin Heidelberg",
address="Berlin, Heidelberg",
pages="78--92",
abstract="We consider a cyclic approach to inductive reasoning in the setting of first-order logic with inductive definitions. We present a proof system for this language in which proofs are represented as finite, locally sound derivation trees with a ``repeat function'' identifying cyclic proof sections. Soundness is guaranteed by a well-foundedness condition formulated globally in terms of traces over the proof tree, following an idea due to Sprenger and Dam. However, in contrast to their work, our proof system does not require an extension of logical syntax by ordinal variables.",
isbn="978-3-540-31822-4"
}

@inproceedings{BrBoCa08,
  title={Cyclic proofs of program termination in separation logic},
  author={James Brotherston and Richard Bornat and Cristiano Calcagno},
  booktitle={ACM-SIGACT Symposium on Principles of Programming Languages},
  year={2008},
  url={https://api.semanticscholar.org/CorpusID:16365291}
}

@article{BrSi11,
    author = {Brotherston, James and Simpson, Alex},
    title = {Sequent calculi for induction and infinite descent},
    journal = {Journal of Logic and Computation},
    volume = {21},
    number = {6},
    pages = {1177-1216},
    year = {2010},
    month = {10},
    abstract = {This article formalizes and compares two different styles of reasoning with inductively defined predicates, each style being encapsulated by a corresponding sequent calculus proof system.The first system, LKID, supports traditional proof by induction, with induction rules formulated as rules for introducing inductively defined predicates on the left of sequents.We show LKID to be cut-free complete with respect to a natural class of Henkin models; the eliminability of cut follows as a corollary. The second system, LKIDω, uses infinite (non-well-founded) proofs to represent arguments by infinite descent. In this system, the left-introduction rules for inductively defined predicates are simple case-split rules, and an infinitary, global condition on proof trees is required in order to ensure soundness.We show LKIDω to be cut-free complete with respect to standard models, and again infer the eliminability of cut. The infinitary system LKIDω is unsuitable for formal reasoning. However, it has a natural restriction to proofs given by regular trees, i.e. to those proofs representable by finite graphs, which is so suited. We demonstrate that this restricted ‘cyclic’ proof system, CLKIDω, subsumes LKID, and conjecture that CLKIDω and LKID are in fact equivalent, i.e. that proof by induction is equivalent to regular proof by infinite descent.},
    issn = {0955-792X},
    doi = {10.1093/logcom/exq052},
    url = {https://doi.org/10.1093/logcom/exq052},
}

@inproceedings{BrGoPe12,
  author    = {James Brotherston and
               Nikos Gorogiannis and
               Rasmus Lerchedahl Petersen},
  editor    = {Ranjit Jhala and
               Atsushi Igarashi},
  title     = {A generic cyclic theorem prover},
  booktitle = {Programming Languages and Systems - 10th Asian Symposium, {APLAS}
               2012, Kyoto, Japan, December 11-13, 2012. Proceedings},
  series    = {Lecture Notes in Computer Science},
  volume    = {7705},
  pages     = {350--367},
  publisher = {Springer},
  year      = {2012},
  url       = {https://doi.org/10.1007/978-3-642-35182-2\_25},
  doi       = {10.1007/978-3-642-35182-2\_25},
  timestamp = {Tue, 14 May 2019 10:00:41 +0200},
  biburl    = {https://dblp.org/rec/conf/aplas/BrotherstonGP12.bib},
  bibsource = {dblp computer science bibliography, https://dblp.org}
}

@InProceedings{DaPo18,
  author =	{Das, Anupam and Pous, Damien},
  title =	{{Non-wellfounded proof theory for (Kleene + Action)(Algebras + Lattices)}},
  booktitle =	{27th EACSL Annual Conference on Computer Science Logic (CSL 2018)},
  pages =	{19:1--19:18},
  series =	{Leibniz International Proceedings in Informatics (LIPIcs)},
  ISBN =	{978-3-95977-088-0},
  ISSN =	{1868-8969},
  year =	{2018},
  volume =	{119},
  editor =	{Ghica, Dan R. and Jung, Achim},
  publisher =	{Schloss Dagstuhl -- Leibniz-Zentrum f{\"u}r Informatik},
  address =	{Dagstuhl, Germany},
  URL =		{https://drops.dagstuhl.de/entities/document/10.4230/LIPIcs.CSL.2018.19},
  URN =		{urn:nbn:de:0030-drops-96869},
  doi =		{10.4230/LIPIcs.CSL.2018.19},
  annote =	{Keywords: Kleene algebra, proof theory, sequent system, non-wellfounded proofs}
}

@article{Das20,
  author    = {Anupam Das},
  title     = {On the logical complexity of cyclic arithmetic},
  journal   = {Log. Methods Comput. Sci.},
  volume    = {16},
  number    = {1},
  year      = {2020},
  url       = {https://doi.org/10.23638/LMCS-16(1:1)2020},
  doi       = {10.23638/LMCS-16(1:1)2020},
  timestamp = {Wed, 28 Oct 2020 16:48:18 +0100},
  biburl    = {https://dblp.org/rec/journals/lmcs/Das19.bib},
  bibsource = {dblp computer science bibliography, https://dblp.org}
}

@misc{Das21,
      title={A circular version of G\"odel's T and its abstraction complexity}, 
      author={Anupam Das},
      year={2021},
      eprint={2012.14421},
      archivePrefix={arXiv},
      primaryClass={cs.LO},
      url={https://arxiv.org/abs/2012.14421}, 
}

@InProceedings{FoSa13,
  author =	{Fortier, J\'{e}r\^{o}me and Santocanale, Luigi},
  title =	{{Cuts for circular proofs: semantics and cut-elimination}},
  booktitle =	{Computer Science Logic 2013 (CSL 2013)},
  pages =	{248--262},
  series =	{Leibniz International Proceedings in Informatics (LIPIcs)},
  ISBN =	{978-3-939897-60-6},
  ISSN =	{1868-8969},
  year =	{2013},
  volume =	{23},
  editor =	{Ronchi Della Rocca, Simona},
  publisher =	{Schloss Dagstuhl -- Leibniz-Zentrum f{\"u}r Informatik},
  address =	{Dagstuhl, Germany},
  URL =		{https://drops.dagstuhl.de/entities/document/10.4230/LIPIcs.CSL.2013.248},
  URN =		{urn:nbn:de:0030-drops-42019},
  doi =		{10.4230/LIPIcs.CSL.2013.248},
  annote =	{Keywords: categorical proof-theory, fixpoints, initial and final (co)algebras, inductive and coinductive types}
}

@article{HeVi21,
title = {Unprovability results for clause set cycles},
journal = {Theoretical Computer Science},
volume = {935},
pages = {21-46},
year = {2022},
issn = {0304-3975},
doi = {https://doi.org/10.1016/j.tcs.2022.07.003},
url = {https://www.sciencedirect.com/science/article/pii/S0304397522004273},
author = {Stefan Hetzl and Jannik Vierling},
keywords = {Automated inductive theorem proving, Cyclic proofs, Weak arithmetical theories},
abstract = {The notion of clause set cycle abstracts a family of methods for automated inductive theorem proving based on the detection of cyclic dependencies between clause sets. By discerning the underlying logical features of clause set cycles, we are able to characterize clause set cycles by a logical theory. We make use of this characterization to provide practically relevant unprovability results for clause set cycles that exploit different logical features.}
}

@book{HP,
        AUTHOR = "H\'ajek, P. and Pudl\'ak, P.",
        TITLE = "Metamathematics of {F}irst {O}rder {A}rithmetic",
        PUBLISHER = "Springer-{V}erlag",
        ADDRESS = "Berlin, Heidelberg, New York",
        YEAR = 1993}

@article{Jer2020,
	author = {Emil Je\v{r}\'{a}bek},
	doi = {10.1007/s00153-019-00702-w},
	journal = {Archive for Mathematical Logic},
	number = {3-4},
	pages = {461--501},
		title = {Induction rules in bounded arithmetic},
	volume = {59},
	year = {2020}
}

@article{KiNaTeUn20,
title = "Failure of cut-elimination in cyclic proofs of separation logic",
author = "Daisuke Kimura and Koji Nakazawa and Tachio Terauchi and Hiroshi Unno",
year = "2020",
doi = "10.11309/jssst.37.1_39",
volume = "37",
pages = "39--52",
journal = "Computer Software",
publisher = "Japan Society for Software Science and Technology",
number = "1"
}

@article{NePl98,
 ISSN = {10798986},
 URL = {http://www.jstor.org/stable/420956},
 abstract = {A way is found to add axioms to sequent calculi that maintains the eliminability of cut, through the representation of axioms as rules of inference of a suitable form. By this method, the structural analysis of proofs is extended from pure logic to free-variable theories, covering all classical theories, and a wide class of constructive theories. All results are proved for systems in which also the rules of weakening and contraction can be eliminated. Applications include a system of predicate logic with equality in which also cuts on the equality axioms are eliminated.},
 author = {Sara Negri and Jan Von Plato},
 journal = {The Bulletin of Symbolic Logic},
 number = {4},
 pages = {418--435},
 publisher = {[Association for Symbolic Logic, Cambridge University Press]},
 title = {Cut elimination in the presence of axioms},
 urldate = {2024-11-13},
 volume = {4},
 year = {1998}
}

@book{NePlRa01, place={Cambridge}, title={Structural Proof Theory}, publisher={Cambridge University Press}, author={Negri, Sara and von Plato, Jan and Ranta, Aarne}, year={2001}}

@article{NiWa96,
title = {Games for the $\mu$-calculus},
journal = {Theoretical Computer Science},
volume = {163},
number = {1},
pages = {99-116},
year = {1996},
issn = {0304-3975},
doi = {https://doi.org/10.1016/0304-3975(95)00136-0},
url = {https://www.sciencedirect.com/science/article/pii/0304397595001360},
author = {Damian Niwiński and Igor Walukiewicz},
}

@InProceedings{NoSaTa18,
  author =	{Nollet, R\'{e}mi and Saurin, Alexis and Tasson, Christine},
  title =	{{Local validity for circular proofs in linear logic with fixed points}},
  booktitle =	{27th EACSL Annual Conference on Computer Science Logic (CSL 2018)},
  pages =	{35:1--35:23},
  series =	{Leibniz International Proceedings in Informatics (LIPIcs)},
  ISBN =	{978-3-95977-088-0},
  ISSN =	{1868-8969},
  year =	{2018},
  volume =	{119},
  editor =	{Ghica, Dan R. and Jung, Achim},
  publisher =	{Schloss Dagstuhl -- Leibniz-Zentrum f{\"u}r Informatik},
  address =	{Dagstuhl, Germany},
  URL =		{https://drops.dagstuhl.de/entities/document/10.4230/LIPIcs.CSL.2018.35},
  URN =		{urn:nbn:de:0030-drops-97025},
  doi =		{10.4230/LIPIcs.CSL.2018.35},
  annote =	{Keywords: sequent calculus, non-wellfounded proofs, circular proofs, induction, coinduction, fixed points, proof-search, linear logic, muMALL, finitization, infinite descent}
}

@article{OdBrTa23,
    author = {Oda, Yukihiro and Brotherston, James and Tatsuta, Makoto},
    title = {The failure of cut-elimination in cyclic proof for first-order logic with inductive definitions},
    journal = {Journal of Logic and Computation},
    year = {2023},
    month = {12},
    abstract = {A cyclic proof system is a proof system whose proof figure is a tree with cycles. The cut-elimination in a proof system is fundamental. It is conjectured that the cut-elimination in the cyclic proof system for first-order logic with inductive definitions does not hold. This paper shows that the conjecture is correct by giving a sequent not provable without the cut rule but provable in the cyclic proof system.},
    issn = {0955-792X},
    doi = {10.1093/logcom/exad068},
    url = {https://doi.org/10.1093/logcom/exad068},
}

@article{Par68,
        TITLE = "Hierarchies of primitive recursive functions",
        AUTHOR = "Parsons, C.",
        JOURNAL = ZML,
        VOLUME = 14,
        NUMBER = 4,
        YEAR = 1968,
        PAGES = "357-376"}

@InProceedings{San02,
author="Santocanale, Luigi",
editor="Nielsen, Mogens
and Engberg, Uffe",
title="A calculus of circular proofs and its categorical semantics",
booktitle="Foundations of Software Science and Computation Structures",
year="2002",
publisher="Springer",
address="Berlin, Heidelberg",
pages="357--371",
abstract="We present a calculus of ``circular proofs'': the graph underlying a proof is not a finite tree but instead it is allowed to contain a certain amount of cycles.The main challenge in developing a theory for the calculus is to define the semantics of proofs, since the usual method by induction on the structure is not available. We solve this problem by associating to each proof a system of equations - defining relations among undetermined arrows of an arbitrary category with finite products and coproducts as well as constructible initial algebras and final coalgebras - and by proving that this system admits always a unique solution.",
isbn="978-3-540-45931-6"
}

@InProceedings{Sau23,
author="Saurin, Alexis",
editor="Ramanayake, Revantha
and Urban, Josef",
title="A linear perspective on cut-elimination for non-wellfounded sequent calculi with least and greatest fixed-points",
booktitle="Automated Reasoning with Analytic Tableaux and Related Methods",
year="2023",
publisher="Springer Nature Switzerland",
address="Cham",
pages="203--222",
abstract="This paper establishes cut-elimination for {\$}{\$}{\backslash}mathsf {\{}{\backslash}mu LL^{\backslash}infty {\}}{\$}{\$}, {\$}{\$}{\backslash}mathsf {\{}{\backslash}mu LK^{\backslash}infty {\}}{\$}{\$}and {\$}{\$}{\backslash}mathsf {\{}{\backslash}mu LJ^{\backslash}infty {\}}{\$}{\$}, that are non-wellfounded sequent calculi with least and greatest fixed-points, by expanding on prior works by Santocanale and Fortier [20] as well as Baelde et al. [3, 4]. The paper studies a fixed-point encoding of {\$}{\$}{\backslash}textsf{\{}LL{\}}{\$}{\$}exponentials in order to deduce those cut-elimination results from that of {\$}{\$}{\backslash}mathsf {\{}{\backslash}mu MALL^{\backslash}infty {\}}{\$}{\$}. Cut-elimination for {\$}{\$}{\backslash}mathsf {\{}{\backslash}mu LK^{\backslash}infty {\}}{\$}{\$}and {\$}{\$}{\backslash}mathsf {\{}{\backslash}mu LJ^{\backslash}infty {\}}{\$}{\$}is obtained by developing appropriate linear decorations for those logics.",
isbn="978-3-031-43513-3"
}

@incollection{Sch77,
        TITLE = "{S}ome applications of cut-elimination",
        AUTHOR = "Schwichtenberg, H.",
        BOOKTITLE = "{H}andbook of {M}athematical {L}ogic",
        EDITOR = "Barwise, J.",
        PUBLISHER = "North Holland",
        ADDRESS = "Amsterdam",
        YEAR = 1977,
        PAGES = "867-896"}

@article{Sham14,
  title={Circular proofs for the G{\"o}del-L{\"o}b provability logic},
  author={Daniyar S. Shamkanov},
  journal={Mathematical Notes},
  year={2014},
  volume={96},
  pages={575 - 585},
  url={https://api.semanticscholar.org/CorpusID:118953787}
}

@article{Sham16,
  title={A realization theorem for the G{\"o}del-L{\"o}b provability logic},
  author={Daniyar S. Shamkanov},
  journal={Sbornik: Mathematics},
  year={2016},
  volume={207},
  pages={1344 - 1360},
  url={https://api.semanticscholar.org/CorpusID:124327503}
}

@inproceedings{Sham20,
  title={Global neighbourhood completeness of the provability logic GLP},
  author={Daniyar S. Shamkanov},
  booktitle={Advances in Modal Logic},
  year={2020},
  url={https://api.semanticscholar.org/CorpusID:231822212}
}

@article{Sham24,
    author = {Daniyar S. Shamkanov},
    title = {On structural proof theory of the modal logic K+ extended with infinitary derivations},
    journal = {Logic Journal of the IGPL},
    year = {2024},
    month = {11},
    abstract = {We consider an extension of the modal logic of transitive closure \$\\textsf\{K\}^\{+\}\$ with certain infinitary derivations and present a sequent calculus for this extension, which allows non-well-founded proofs. We establish continuous cut-elimination for the given calculus using fixed-point theorems for contractive mappings. The infinitary derivations mentioned above are well founded and countably branching, while the non-well-founded proofs of the sequent calculus can only be finitely branching. The ordinary derivations in \$\\textsf\{K\}^\{+\}\$, as we show additionally, correspond to the non-well-founded proofs of the calculus that are regular and cut-free. Therefore, in this article, we explore the relationship between deductive systems for \$\\textsf\{K\}^\{+\}\$ with well-founded infinitely branching derivations and (regular) non-well-founded finitely branching proofs.},
    issn = {1367-0751},
    doi = {10.1093/jigpal/jzae121},
    url = {https://doi.org/10.1093/jigpal/jzae121},
}

@article{Sham25,
    author = {Daniyar S. Shamkanov},
    title = {A realization theorem for the modal logic of transitive closure K+},
    journal = {Izv. RAN. Ser. Mat.},
    year = {2025},
    number = {2},
    volume = {89},
    note = {(to appear)},
}

@inproceedings{SiStZe24,
  title={Coalgebraic proof translations for non-wellfounded proofs},
  author={Borja Sierra-Miranda and Thomas Studer and Lukas Zenger},
  booktitle={Advances in Modal Logic},
  year={2024},
  url={https://api.semanticscholar.org/CorpusID:270200799}
}

@inproceedings{Sim17,
  author    = {Alex Simpson},
  editor    = {Javier Esparza and
               Andrzej S. Murawski},
  title     = {Cyclic arithmetic is equivalent to {Peano} arithmetic},
  booktitle = {Foundations of Software Science and Computation Structures - 20th
               International Conference, {FOSSACS} 2017,
               Uppsala, Sweden, April 22--29, 2017, Proceedings},
  series    = {Lecture Notes in Computer Science},
  volume    = {10203},
  pages     = {283--300},
  year      = {2017},
  url       = {https://doi.org/10.1007/978-3-662-54458-7\_17},
  doi       = {10.1007/978-3-662-54458-7\_17},
  timestamp = {Tue, 14 May 2019 10:00:55 +0200},
  biburl    = {https://dblp.org/rec/conf/fossacs/Simpson17.bib},
  bibsource = {dblp computer science bibliography, https://dblp.org}
}

@unpublished{Smir23,
    AUTHOR = "Ivan N. Smirnov",
    TITLE = "Automated derivation in cyclic arithmetic",
    YEAR = "2023",
    NOTE= "M.Sc.~Thesis, Moscow Institute of Physics and Technology. In Russian."}

@inproceedings{Sti14,
  author    = {Colin Stirling},
  title     = {A tableau proof system with names for modal mu-calculus},
  booktitle = {HOWARD-60. A Festschrift on the Occasion of Howard Barringer's 60th Birthday},
  editor    = {Andrei Voronkov and Margarita Korovina},
  series    = {EPiC Series in Computing},
  volume    = {42},
  publisher = {EasyChair},
  bibsource = {EasyChair, https://easychair.org},
  issn      = {2398-7340},
  url       = {/publications/paper/5gD},
  doi       = {10.29007/lwqm},
  pages     = {306-318},
  year      = {2014}}

@article{Var,
        AUTHOR = "Vardanyan, V.A.",
        TITLE = "Arithmetic comlexity of predicate logics
of provability and their fragments",
        YEAR = 1986,
        JOURNAL = "Doklady Akad. Nauk SSSR",
        VOLUME = 288,
        NUMBER = 1,
        PAGES = "11-14",
        NOTE = "In {R}ussian. English translation in {\em {S}oviet
{M}athematics {D}oklady} 33:569--572, 1986"}

\end{document}